\def\acknowledgement{\medskip {\bf {Acknowledgements}: }}
\let\f=\frac
\def\R{\mathbb{R}}
\def\E{\mathbb{E}}
\def\N{\mathbb{N}}
\def\Id{\,{\rm Id}}
\def\div{{\rm div}}
\def\tr{{\rm tr}}
\def\XX{\mathbf{X}}
\def\BB{\mathbf{B}}
\def\KK{\mathbf{K}}
\def\Pe{{\rm Pe}}
\def\var{\text{Var}}
\newtheorem{prop}{Proposition}[section]
\newtheorem{remark}{Remark}
\title{Periodic long-time behaviour for an approximate model of nematic polymers}
\author{Lingbing He$^1$, Claude Le Bris$^2$, Tony Leli\`evre$^2$\\
{\footnotesize 1- Department of Mathematical Sciences,}\\ 
{\footnotesize Tsinghua University, Beijing,}\\
{\footnotesize  China, 100084.}\\
{\footnotesize\tt lbhe@math.tsinghua.edu.cn}\\
\footnotesize{2- CERMICS, \'Ecole Nationale des Ponts et
Chauss\'ees,}\\ 
{\footnotesize 6 \& 8, avenue Blaise Pascal,}\\
{\footnotesize  77455 Marne-La-Vall\'ee
  Cedex 2,}\\ 
{\footnotesize and}\\
{\footnotesize  INRIA Rocquencourt, MICMAC project-team,}\\
{\footnotesize  Domaine de Voluceau, B.P. 105,}\\
{\footnotesize  78153 Le Chesnay Cedex, FRANCE.}\\
{\footnotesize\tt \{lebris,lelievre\}@cermics.enpc.fr}
}
\begin{document}

\maketitle

\begin{abstract}
We study the long-time behaviour of a nonlinear Fokker-Planck equation, which models the evolution of rigid polymers in a given flow, after a closure approximation. The aim of this work is twofold: first, we propose a microscopic derivation of the classical Doi closure, at the level of the kinetic equation ; second, we prove the convergence of the solution to the Fokker-Planck equation to periodic solutions in the long-time limit.
\end{abstract}

\section{Introduction}

In a previous work~\cite{jourdain-le-bris-lelievre-otto-06}, two of us have studied the long time
behaviour of some flows of infinitely dilute flexible polymers. It has been proved that for appropriate boundary data, and
provided the solution  is assumed regular, the solution returns to
equilibrium in a long time, whatever the initial condition. The
mathematical ingredient for the proof is the appropriate use of
Log-Sobolev inequalities and entropy methods {\it \`a la
  Desvillettes-Villani}, applied to the Fokker-Planck equation
derived from statistical mechanics and kinetic theory considerations and
modelling the evolution of the polymer chains.  It turns out that flows of \emph{rigid} polymers
exhibit equally interesting and, actually, much more varied properties in the long time, in particular stationary periodic-in-time motions. Several behaviours have been experimentally observed. Numerical simulations
confirm the ability of the models employed to reproduce such
behaviours. We refer for example to~\cite{constantin-kevrekidis-titi-04_a,constantin-kevrekidis-titi-04_b,zhang-zhang-04_doi} for previous mathematical studies. Those behaviours are
traditionally filed in different categories with an appropriate
terminology. One speaks of flows exhibiting \emph{kayaking}, \emph{tumbling},
etc. Mathematically, one underlying question that might be considered is
to investigate whether the
solution to the Fokker-Planck equation ruling the evolution of the
microstructure (here, typically rigid rods) converges in the long term to a
periodic-in-time solution. This is in sharp contrast to the case of
flexible polymers considered in~\cite{jourdain-le-bris-lelievre-otto-06} where the long time limit of
the solution to the Fokker-Planck equation is
a \emph{steady}, that is \emph{time-independent} density. The
mathematical ingredients mentioned above (Log-Sobolev inequalities and
entropy methods) are again useful, but their use is more delicate, as
will be seen below.

The purpose of this article is to consider a simple setting, where the
long term behaviour of the evolution of the microstructures can be
proven to indeed be periodic. 

To start with, we consider a commonly used model, namely the rigid rod model with a Maier-Saupe potential. It has been observed numerically~\cite{zhang-zhang-04_doi} that, for this specific
model,
the flow is, in the long time, periodic-in-time. It is therefore an adequate
setting to consider with the view to proving that, in the long time, the
solution to the Fokker-Planck equation converges to a periodic-in-time
solution. This Fokker-Planck equation formally writes
\begin{equation}
  \label{eq:FP-11}
  {{\partial\Psi}\over{\partial t}}= L(\Psi)\,\Psi
\end{equation}
where $L(\Psi)$ is a nonlinear nonlocal partial differential operator, essentially
parabolic (see the weak formulation~\eqref{eq:FP1orig} below) and $\Psi$ is the probability
density function describing the state of the microstructure.  
We are unfortunately
unable to prove mathematically the expected long term behaviour of the rigid rod model with a Maier-Saupe potential. Note
that this is indeed a particularly challenging issue to make a period appear in
an equation of the form~\eqref{eq:FP-11} which does not explicitly contain any periodic
function to start with! In some sense, we would need a Poincar\'e
Bendixson type theorem for an infinite dimensional system. This is
beyond our reach.

So we proceed somewhat
differently. In Sections~\ref{sec:closed} and~\ref{sec:derivation}, we first derive an \emph{approximation} of the model, that gives rise to a \emph{closed} evolution
equation for the so-called \emph{conformation tensor}. This equation agrees with the equation obtained when a
classical Doi-type closure is performed on the original model. In passing, we motivate in Section~\ref{sec:proj_moy} our particular choice of
approximate model.  We finally prove, in Section~\ref{sec:convergence}, that the solution to this equation
does behave as expected in the long time: it becomes periodic-in-time. Our proof falls in essentially
two steps. We first show, in Section~\ref{ssec:existence}, that the conformation tensor $\displaystyle \int x \otimes x \, \Psi$
calculated from the solution $\Psi$ to our approximate model, and which  satisfies the
\emph{closed}   evolution equation~\eqref{eq:doi_closure} (this is the
whole point of the closure approximation), becomes periodic-in-time in the long
term. We next use this result in our final Section~\ref{ssec:convergence} 
to conclude our study on the convergence of the density $\Psi$
itself. Because our fundamental tool (in the course of Section~\ref{ssec:existence}) is the Poincar\'e-Bendixson
Theorem (following the work~\cite{lee-forest-zhou-06}), our main result is unfortunately restricted to the
two-dimensional setting. Other intermediate results we prove however
hold whatever the dimension (and they have been proved and stated
so). Another technical limitation lies in the fact we exploit, for our
proof, the specific {\em explicit} expression of the time-periodic solution we
establish the existence of, and which attracts all solutions in the
long-time. More generality in the technique of proof would be highly
desirable but is out of our reach to date.  

In summary, the main contributions of this work are of two different
types in nature: from a modelling viewpoint, we propose a microscopic derivation of the quadratic Doi closure, using a stochastic dynamics with constraints on an average quantity (see Proposition~\ref{prop:doi}) ; from a mathematical viewpoint, we analyze the longtime behaviour of the solution to a nonlinear Fokker-Planck equation which converges to a periodic in time function (see Proposition~\ref{prop:CV}).

Our study can be considered in the vein of several previous studies such
as~\cite{dolbeault-kinderlehrer-kowalczyk-02,bartier-dolbeault-illner-kowalczyk-07}. It is the authors' wish that the quite limited study performed here will
be yet another incentive for mathematicians to consider the question
of long term convergence to non steady stationary states for solutions
to kinetic equations of the type~\eqref{eq:FP-11}.  
We reiterate that, in our opinion, the issue of proving, in some
particular settings and under
appropriate conditions, that solutions to nonlinear
Fokker-Planck equations of the type~\eqref{eq:FP-11} converge in
the long time to periodic-in-time solutions is an interesting, unsolved
mathematical issue.

\section{The Original model and the Doi closure}\label{sec:doi}

\subsection{The Maier-Saupe model for rigid polymers}
\label{sec:original}
Using the notation of~\cite{lee-forest-zhou-06}, we consider, in
Stratonovich form, the following stochastic adimensionalized model for a
rigid
polymer with the Maier-Saupe potential:
\begin{equation}
\label{eq:Xt-strat}
  dX_t = P(X_t)( \kappa X_t + 4N \E(X_t \otimes X_t) X_t ) \, dt + \sqrt{2} P(X_t) \circ dB_t,
\end{equation}
where $B_t$ denotes a $d$-dimensional Brownian motion, $N$ is a dimensionless concentration parameter and  the matrix $\kappa \in \R^{d \times d}$  is related to the velocity field of the ambient flow, which is assumed to be homogeneous (so that transport terms in~\eqref{eq:Xt-strat} are omitted). The purpose of the projector operator
\begin{equation}\label{eq:P}
P(X) = \Id - \frac{X \otimes X}{\|X\|^2}
\end{equation}
is to ensure the preservation of the rigid polymer (nematic crystalline
polymer) norm $\|X_t\|$ in time:
$$d \|X_t\|^2 = 0.$$
Here and in the following, we use the tensor product notation: for two vectors $u$ and $v$ in $\R^d$, $u \otimes v$ is the $\R^{d \times d}$ matrix whose $(i,j)$-entry is $u_i v_j$. Notice that $P(X)$ is a symmetric matrix such that $P(X)P(X)=P(X)$.

The right-hand side of Equation~\eqref{eq:Xt-strat} contains three terms
which model different phenomena. The first term models the reorientation
due to the velocity gradient $\kappa$ of the fluid. The second,
non-linear term contains the force associated to the Maier-Saupe potential, which describes the effective interaction of a rigid rod with the other rods. The rightmost term is a rotational
diffusion term.

In the following, we will in particular consider the two-dimensional case ($d=2$) and a simple shear flow, for which
\begin{equation}\label{eq:kappa}
  \kappa=\frac{\Pe}{2}
\left[
\begin{array}{cc}
0 & a+1 \\
a-1 & 0
\end{array}
\right]
\end{equation}
where $\Pe$ (the Péclet number) and $a$ (a molecular shape parameter) are two adimensional parameters, see~\cite{lee-forest-zhou-06}. But for the moment being, we consider~\eqref{eq:Xt-strat} in full generality.

Equation
\eqref{eq:Xt-strat} equivalently writes, using It\^o's integration rule:
\begin{equation}
\label{eq:Xt}
dX_t = P(X_t)( \kappa X_t + 4N\E(X_t \otimes X_t) X_t ) \, dt + \sqrt{2} P(X_t) dB_t - (d-1) \frac{X_t}{\|X_t\|^2} \, dt,
\end{equation}
where $d$ is the dimension of the ambient space. Obtaining~\eqref{eq:Xt} from~\eqref{eq:Xt-strat} is straightforward using (with implied summation over repeated indices)
\begin{eqnarray*}  \sqrt{2} P_{i,j}(X_t) \circ dB^j_t=\sqrt{2} P_{i,j}(X_t) dB^j_t+\f12
  (\sqrt{2} P_{j,k}(X_t)\partial_j)\cdot \sqrt{2} P_{i,k}(X_t)dt, \end{eqnarray*}
and
 \begin{align*}   P_{j,k}(x)\partial_j P_{i,k}(x)
&=(\delta_{j,k}- x_j x_k / |x|^2) \, \partial_j(\delta_{i,k}- x_i x_k / |x|^2)\\
&=(1-d) x_i / |x|^2,
\end{align*}
where $\delta_{i,j}$ is the Kronecker symbol.

The Fokker-Planck equation associated to~\eqref{eq:Xt} (or
equivalently to~\eqref{eq:Xt-strat}) writes, in weak form: for any smooth test function $\varphi: \R^d \to \R$,
\begin{equation}
  \label{eq:FP1orig}
\begin{aligned}
\frac{d}{dt} \int \varphi(x) \mu_t(dx) &= \int P(x)
  \left(\kappa x + 4N \left(\int x \otimes x \, \mu_t(dx)\right) x \right) \cdot \nabla \varphi (x)  \mu_t(dx)  \\
&\quad+ \int (P_{j,k}(x) \partial_j) \left( P_{i,k}(x) \partial_i \varphi \right)\mu_t(dx),
\end{aligned}
\end{equation}
with again implied summation over repeated indices and where for any time $t \ge 0$, $\mu_t(dx)$ is the law of $X_t$ (with support on a sphere).

 \subsection{The Doi closure to derive a closed equation for $\E(X_t \otimes X_t)$ }
 \label{sec:closed}

As announced in the introduction, we now recall how to derive from
 \eqref{eq:Xt} a \emph{closed} evolution equation on the conformation tensor $$M(t) = \E(X_t \otimes X_t),$$
using the standard quadratic Doi closure approximation~\cite{doi-81}. It is the aim of Section~\ref{sec:proj_moy} below to provide a microscopic justification of this closure.

Using elementary It\^o differential calculus (and the properties $P=P^T$ and $P^2=P$), we compute:
\begin{align*}
d (X_t \otimes X_t)&=
dX_t \otimes X_t + X_t \otimes dX_t + 2 P P^T(X_t) dt\\
&= \left(
 P(X_t) \kappa X_t \otimes X_t +  X_t \otimes X_t \kappa^T P(X_t) \right) \, dt\\
&\quad + 4N \left( P(X_t) \E(X_t \otimes X_t) X_t \otimes X_t + X_t \otimes X_t \E(X_t \otimes X_t) P(X_t)  \right) \, dt \\
&\quad - 2 (d-1) \frac{X_t \otimes X_t}{\|X_t\|^2} \, dt  + 2 P(X_t) \,  dt \\
& \quad + \sqrt{2} P(X_t) dB_t \otimes X_t + \sqrt{2} X_t \otimes dB_t P(X_t)\\
&= \left(
 P(X_t) \kappa X_t \otimes X_t +  X_t \otimes X_t \kappa^T P(X_t) \right) \, dt\\
&\quad + 4N \left( P(X_t) \E(X_t \otimes X_t) X_t \otimes X_t + X_t \otimes X_t \E(X_t \otimes X_t) P(X_t)  \right) \, dt \\
&\quad +2 \left( \Id - d \frac{X_t \otimes X_t}{\|X_t\|^2} \right)\, dt \\
& \quad + \text{loc.mart.}
\end{align*}
where ${\it loc. mart.}$ denotes a
  local martingale that we do not need to make precise for the rest of
  our argument. 
Taking the trace of the previous equation
 and using that   $\tr(AB)=\tr(BA)$ and  $(X\otimes X)\, P(X) = P(X)\,( X
 \otimes X) = 0$, we check the preservation of the  norm
$$d \|X_t\|^2=0$$
as was announced earlier. We henceforth set $\|X_t\|=L$.

Taking now the expectation, we obtain:
\begin{equation}\label{eq:M}
\begin{aligned}
\frac{d M}{dt}&= \kappa M + M \kappa^T - \frac{2}{L^2} \E\left(  \kappa : X_t \otimes X_t \, X_t \otimes X_t\right)\\
&\quad+ 4N \left( 2 M^2 - \frac{2}{L^2} \E \left(  M: X_t \otimes X_t \, X_t \otimes X_t \right) \right) \\
& \quad +  2 \Id - \frac{2 d}{L^2} M,
\end{aligned}
\end{equation}
where we here introduced the Frobenius inner product: for two $\R^{d\times d}$ matrices $A$ and $B$, $A:B = \tr (AB^T)= \sum_{i,j=1}^d A_{i,j} B_{i,j}$.

At this stage, we use the so-called \emph{quadratic Doi closure}~\cite{doi-81} that consists in performing
the following approximation: for any deterministic matrix $K$,
$$\E \left(K X_t \otimes X_t \, X_t \otimes X_t\right)=K:\E(X_t \otimes X_t) \, \E (X_t \otimes X_t).$$
The following \emph{closed} nonlinear first order differential equation
that rules the evolution of~$M$ in time is thereby obtained (using the fact that $\tr(M)=L^2$):
\begin{equation}
\label{eq:doi_closure}
\begin{aligned}
\frac{d M}{dt}&= \kappa M + M \kappa^T - \frac{2}{\tr(M)}
\kappa : M  \, M\\
&\quad+ 4N \left( 2 M^2 - \frac{2}{\tr(M)} M:M \, M \right)\\
& \quad +  2 \Id - \frac{2d}{\tr(M)} M.
\end{aligned}
\end{equation}
Note that, at this stage, the above equation is formal since we do not
know that $\tr(M)$ does not vanish. It is a consequence
of the following proposition that this is not the case.

\begin{prop}
\label{prop:sym/trace}
Assume that we supply equation~\eqref{eq:doi_closure} with an initial
condition $M(0)$ that is a symmetric matrix and that satisfies
$\tr(M(0))=L^2>0$. Then there exists a unique solution $M(t)$ to~\eqref{eq:doi_closure}. Moreover, this solution
remains symmetric for all times and with constant trace: $\tr (M(t))=L^2$.
\end{prop}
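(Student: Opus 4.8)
The plan is to treat~\eqref{eq:doi_closure} as an ODE $\dot M = F(M)$ in the finite-dimensional space $\R^{d\times d}$, and to establish (i) local existence and uniqueness, (ii) invariance of the affine subspace of symmetric matrices with trace $L^2$, and then (iii) global existence by ruling out blow-up, using the trace constraint together with a bound on $\|M\|$. First I would observe that the right-hand side $F(M)$ is a rational function of the entries of $M$, smooth on the open set $\{\tr(M) > 0\}$; since $M(0)$ lies in this set, the Cauchy-Lipschitz theorem gives a unique maximal solution on some interval $[0,T^\ast)$, and by continuity $\tr(M(t))>0$ at least for small times, so the equation is genuinely well-posed there.

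Next I would verify the two invariances. For symmetry: writing $S(t) = M(t) - M(t)^T$, one checks directly from~\eqref{eq:doi_closure} that $F$ maps symmetric matrices to symmetric matrices (the terms $\kappa M + M\kappa^T$, $M^2$, $M:M\,M$, $\kappa:M\,M$, $\Id$ and $M$ are all symmetric when $M$ is), so $S$ solves a linear homogeneous ODE with $S(0)=0$, hence $S\equiv 0$; alternatively one derives a closed linear equation for $S$ and invokes uniqueness. For the trace: taking the trace of~\eqref{eq:doi_closure} and using $\tr(\kappa M) = \tr(M\kappa^T) = \kappa:M$, $\tr(M^2) = M:M$, and $\tr(\Id) = d$, the right-hand side collapses to
\[
\frac{d}{dt}\tr(M) = 2\,\kappa:M - \frac{2}{\tr(M)}\kappa:M\,\tr(M) + 8N\Big(M:M - \frac{1}{\tr(M)}M:M\,\tr(M)\Big) + 2d - \frac{2d}{\tr(M)}\tr(M) = 0,
\]
so $\tr(M(t)) = \tr(M(0)) = L^2$ for as long as the solution exists. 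In particular $\tr(M)$ never vanishes on $[0,T^\ast)$, which retroactively justifies that~\eqref{eq:doi_closure} was a bona fide ODE all along.

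Finally, to upgrade to global existence, I would rule out finite-time blow-up. On the invariant affine slice, $\tr(M)=L^2$ is fixed, but this alone does not bound $\|M\|$ since $M$ need not be positive; so the main obstacle is to control the growth of $\|M\|$. The natural approach is to compute $\frac{d}{dt}\|M\|^2 = 2\, M:F(M)$ and to check that the cubic terms coming from $8N M^2$ and the Doi-closure correction either cancel or have a favourable sign, leaving an estimate of the form $\frac{d}{dt}\|M\|^2 \le C(1 + \|M\|^2)$ with $C$ depending only on $\kappa$, $N$, $d$ and $L$; this would be a routine but somewhat delicate computation exploiting $\tr(M)=L^2$ to absorb the dangerous terms. (If the cleanest route is instead to diagonalize $M$ and track its eigenvalues, that works too in this symmetric setting.) Once such an a priori bound is in hand, Gronwall prevents blow-up on any finite interval, so $T^\ast = +\infty$ and the solution is global. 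I expect step (iii), i.e.\ the a priori bound on $\|M\|$, to be where the real work lies; steps (i) and (ii) are formal manipulations.
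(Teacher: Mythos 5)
Your proposal essentially follows the same skeleton as the paper---local existence via Cauchy--Lipschitz, propagation of symmetry, constancy of the trace---with two noteworthy deviations. For symmetry, you argue that the vector field $F$ is tangent to the affine subspace of symmetric matrices and appeal to invariance. The idea is correct, but the sub-claim that ``$S = M - M^T$ solves a linear homogeneous ODE'' is not: $F$ is quadratic, so the evolution of $S$ is genuinely nonlinear (it contains, e.g., a $(\|A\|^2 + \|S\|^2)S$ contribution, where $A$ is the symmetric part). The clean way to finish is the variant you mention in passing: restrict the ODE to the symmetric subspace, obtain a symmetric solution from the same initial datum, and invoke uniqueness of the full problem. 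The paper proceeds differently: it writes the ODE satisfied by $M^T$, observes that both $M$ and $M^T$ solve a common auxiliary ODE in $B$ (quadratic in $B$, with time-dependent coefficients $\tr(M)$, $\kappa:M$, $M:M$ read off from the already-given solution $M(t)$) with the same initial condition $M(0)$, and concludes $M = M^T$ by uniqueness of that auxiliary problem. Both routes are valid.

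On global existence you are actually more careful than the paper. The paper asserts that preservation of $\tr M$ implies the solution is defined for all time, but this only rules out the trajectory leaving the domain $\{\tr M > 0\}$ where the right-hand side is smooth; it does not by itself exclude finite-time blow-up of $\|M(t)\|$, since the ODE (after substituting $\tr M = L^2$) is polynomial of degree three. You are right to flag this and to propose estimating $\frac{d}{dt}\|M\|^2 = 2\,M\!:\!F(M)$. Carrying that out with $\tr M = L^2$: the terms $M\!:\!(\kappa M + M\kappa^T)$ and $\frac{2d}{L^2}\|M\|^2$ are $O(\|M\|^2)$, the terms $\frac{2}{L^2}\kappa\!:\!M\,\|M\|^2$ and $8N\,\tr(M^3)$ are $O(\|M\|^3)$, $\tr M = L^2$ is constant, and the Doi-closure correction contributes $-\frac{16N}{L^2}\|M\|^4$. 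Since $N>0$, this negative quartic dominates, so $\frac{d}{dt}\|M\|^2$ is in fact bounded above by a constant depending only on $\kappa$, $N$, $d$, $L$, which is even stronger than the $C(1+\|M\|^2)$ bound you anticipate, and Gronwall gives global existence. (The decisive feature is not a cancellation among the cubics, as you suggest, but the favourable sign of the quartic term supplied by the closure.) So your plan is sound, and on this point it fills in a step the paper leaves implicit.
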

\begin{proof}
We consider a time interval on which equation~\eqref{eq:doi_closure} is
well posed. Such a time interval exists by a standard application of the
Cauchy-Lipschitz Theorem. Indeed,  the right-hand
side of~\eqref{eq:doi_closure} is a rational function in the
coefficients of $M$. Momentarily, this time interval may be bounded, but we will soon
see that it is in fact infinite. 

The  adjoint matrix $M ^T(t)$ of $M(t)$ then satisfies the following equation:
\begin{align*} 
\frac{d M^T}{dt}&=   \kappa  M ^T+ M^T \kappa^T - \frac{2}{\tr(M)}
\kappa : M  \, M^T  \\
&\quad+  4N \left( 2(M^T)^2 -\frac{2}{\tr(M)} M:M \, M^T \right)  \\
& \quad +  2\Id - \frac{2d}{\tr(M)} M^T. \end{align*}

It follows that, when $M(t)$ solves~\eqref{eq:doi_closure},  both $M(t)$ and $M^T(t)$ are solutions to the first order
evolution equation
\begin{align*} \frac{d B}{dt}&=    \kappa  B+ B \kappa^T - \frac{2}{\tr(M)}
\kappa : M  \, B  \\
&\quad+  2\,B^2 -\frac{2}{\tr(M)} M:M \, B   \\
& \quad +  2 \Id - \frac{2d}{\tr(M)} B, \end{align*}
and that this holds for the \emph{same} initial condition $M(0)$ since the
latter is symmetric. Now, the right hand side of this differential equation is a second order
polynomial in $B$, with coefficients that are obviously continuous in time (in
turn because $M(t)$ solves~\eqref{eq:doi_closure}). It
follows that the Cauchy-Lipschitz theorem holds for this equation, and thus
that the solution is unique for a given initial condition. This proves
that $M(t)=M^T(t)$ for all times in the considered time interval.

We now take  the trace of \eqref{eq:doi_closure} and we easily check that
$$\frac{d} { dt} \,\tr(M)= 0,$$
using the fact that $M$ is now known to be symmetric. The trace of the solution is thus preserved in time and this in
particular shows that the solution to~\eqref{eq:doi_closure} is defined for any time (for any initial condition
with non zero trace).
We finally notice, as this will be useful below that, since $\tr(M)|_{t=0}=L^2$, \eqref{eq:doi_closure} also writes
\begin{equation}
\label{eq:doi_closure_prime}
\begin{aligned}
\frac{d M}{dt}&= \kappa M + M \kappa^T - \frac{2}{L^2}
\kappa : M  \, M\\
&\quad+ 2 M^2 - \frac{2}{L^2} M:M \, M\\
& \quad +  2 \Id - \frac{2d}{L^2} M.
\end{aligned}
\end{equation}
\end{proof}

 \begin{remark}
   A natural question is  whether Equation~\eqref{eq:doi_closure}
  also preserves positiveness (in the sense of symmetric
   matrices). This property will be a consequence of a rewriting of the
   solution to~\eqref{eq:doi_closure} as $M(t)=\E(X_t \otimes X_t)$ for
   $X_t$ solution to a modified stochastic differential equation, see
   Section~\ref{sec:derivation} below. We are unable to prove this
   preservation otherwise.
 \end{remark}

\subsection{A rewriting of the equations}\label{sec:forest}
It is enlightening to compare our
equation~\eqref{eq:M} with the equation~(10)  in the article~\cite{lee-forest-zhou-06}  by
G. Forest and collaborators. For this purpose we recall the three
dimensionless numbers used by these authors:
the molecular shape parameter~$a$, the P\'eclet number~$\Pe$  and the
dimensionless concentration number~$N$. In this context, the dimension is $d=2$, the length is $L=1$ and $\kappa$ is~\eqref{eq:kappa} and thus writes:
$$\kappa=\Pe \left( \Omega + a D \right)$$
where $\Omega=\frac{1}{2}\left[\begin{array}{cc} 0 & 1 \\ -1  &
    0 \end{array}\right]$ and $D=\frac{1}{2}\left[\begin{array}{cc} 0 &
    1 \\ 1  & 0 \end{array}\right]$.

The equation~\eqref{eq:M} then reads (using the fact that $\Omega$ is skew-symmetric):
\begin{equation}\label{eq:M_forest}
\begin{aligned}
\frac{d M}{dt}&= \Pe \left( \Omega M - M \Omega + a (D M + M D ) - 2 a \E\left(  D : X_t \otimes X_t \, X_t \otimes X_t\right) \right)\\
&\quad+ 8N \left(  M^2 -  \E \left(  M: X_t \otimes X_t \, X_t \otimes X_t \right) \right) \\
& \quad +  4 (\Id/2 -  M).
\end{aligned}
\end{equation}
To agree with the notation of~\cite{lee-forest-zhou-06}, we introduce
$Q=M-\Id/2$, the traceless part of $M$. Equation~\eqref{eq:M_forest} then rewrites:
\begin{equation}\label{eq:Q_forest}
\begin{aligned}
\frac{d Q}{dt}&= \Pe \left( \Omega Q - Q \Omega + a (D Q + Q D + D) - 2 a \E\left(  D : X_t \otimes X_t \, X_t \otimes X_t\right) \right)\\
&\quad+ 8N \left(  Q^2 + Q/2  -  \E \left(  Q : X_t \otimes X_t \, X_t \otimes X_t \right) \right) \\
& \quad -  4 Q.
\end{aligned}
\end{equation}
We observe
  that the above equation agrees with the Equation (10) in
  \cite{lee-forest-zhou-06} up to multiplicative constants that do not affect our conclusions. Using now the Doi closure approximation, we finally get
\begin{equation}
\label{eq:Q}
\begin{aligned}
{{d Q}\over{d t}}&=\Pe\, \left[\Omega Q-Q\Omega+a(DQ+QD + D)-2aD: Q\left(Q+\f12 \Id\right) \right]\\
&\quad-4\,\left[Q-2N\left(Q+\f12 \Id\right)Q+2NQ:Q  \left(Q+\f12 \Id \right) \right].
\end{aligned}
 \end{equation}

\section{Derivation of an evolution equation for $X_t$ that yields our closed equation on
  $M$}
\label{sec:derivation}

We have derived in the previous section a closed equation~\eqref{eq:doi_closure}  on
$M$, using a classical closure technique ({\it \`a la Doi}) on the
original dynamics~\eqref{eq:Xt} on $X_t$. The question now naturally arises
to know whether it is possible to modify the original stochastic dynamics
\eqref{eq:Xt} \emph{itself} so that $M(t)=\E (X_t \otimes X_t) $ calculated
from $X_t$ solution to this modified dynamics is  solution
to~\eqref{eq:doi_closure}.

\subsection{Two possible closures on the stochastic differential equation}

To begin with, using the fact that for any vector $x \in \R^d$ and matrix $A \in \R^{d \times d}$, $(x\otimes x)\,A\,x=A:(x\otimes x)\, x$, we write~\eqref{eq:Xt} under the form:
\begin{align*}
dX_t &=( \kappa X_t + 4N \E(X_t \otimes X_t) X_t ) \, dt \\
&\quad-  \frac{1}{\|X_t\|^2} \left(\kappa: (X_t \otimes X_t) X_t + 4N \E(X_t \otimes X_t) : (X_t \otimes X_t) \, X_t \right) \, dt\\
&\quad + \sqrt{2} P(X_t) dB_t - (d-1) \frac{X_t}{\|X_t\|^2} \, dt.
\end{align*}
We next modify this equation as follows:
\begin{equation}
\label{eq:Xt_modif}
\begin{aligned}
dX_t &= \left( \kappa X_t   + 4N \E(X_t \otimes X_t) X_t\right) \, dt \\
& \quad - \frac{1}{\E(\|X_t\|^2)} \left( \kappa: \E( X_t \otimes X_t)  \, X_t + 4N \E( X_t \otimes X_t ): \E(X_t \otimes X_t) \, X_t \right) \,dt \\
& \quad + \sqrt{2} R_t\, dB_t- \lambda\, \frac{X_t}{\E(\|X_t\|^2)} \, dt,
\end{aligned}
\end{equation}
where the pair $(R_t,\lambda)$ is yet to be determined so that $M(t)=\E (X_t \otimes X_t) $ calculated from $X_t$ solution to~\eqref{eq:Xt_modif}
is indeed solution to~\eqref{eq:doi_closure}. Here, $R_t \in \R^{d \times d}$ is an adapted stochastic process, and $\lambda \in\R$ is a deterministic constant.

As above, an elementary  It\^o calculation yields
 \begin{align*}
d &(X_t \otimes X_t)=
dX_t \otimes X_t + X_t \otimes dX_t + 2 R_t R_t^T dt\\
&=\Big( \kappa X_t \otimes X_t  + X_t \otimes X_t \kappa^T + 8N \E(X_t \otimes X_t)  X_t \otimes X_t\Big) \, dt\\
& \quad - \frac{1}{\E(\|X_t\|^2)} \left( 2 \kappa: \E( X_t \otimes X_t)  \, X_t \otimes X_t  + 8\,N \E( X_t \otimes X_t ): \E(X_t \otimes X_t) \, X_t \otimes X_t \right) \, dt\\
& \quad -2\lambda \frac{X_t\otimes X_t}{\E(\|X_t\|^2)} \, dt + 2 R_tR_t^T \, dt \\
& \quad + \text{loc.mart.}
\end{align*}
Taking the expectation, we obtain
\begin{align*}
\frac{dM}{dt}&=\left( \kappa M  + M \kappa^T + 8 N M^2\right)\\
& \quad - \frac{1}{\tr(M)} \left( 2 \kappa: M  \, M + 8N M:M \, M \right) \\
& \quad -2\lambda \frac{M}{\tr(M)}  + 2 \E(R_tR_t^T) .
\end{align*}
This equation is then equivalent to equation~\eqref{eq:doi_closure} if
and only if
$$- \lambda\frac{M}{\tr(M)} + \E(R_t R_t^T) = \Id - d \frac{M}{\tr(M)}.$$

Obviously, the simplest possible choice for $(R_t,\lambda)$ is to set
\begin{equation}
\label{eq:choix1}
R_t=\Id \text{ and } \lambda=d.
\end{equation}
It follows that the diffusion term in  the associated Fokker-Planck equation is simply a Laplacian. Let us write the non-linear Fokker-Planck equation we thus obtain:
\begin{equation}\label{eq:FP1}
\begin{aligned}
\frac{\partial \psi}{\partial t} &=
\div \left( \left( - \kappa x + \frac{\kappa:M[\psi]}{\tr(M[\psi])} x \right) \psi \right)\\
 &\quad+ 4N 
\div \left( \left( - M[\psi] x + \frac{M[\psi]:M[\psi]}{\tr(M[\psi])} x \right) \psi \right)\\
& \quad + \Delta \psi + d \,\div \left( \frac{x}{\tr(M[\psi])} \psi \right)
\end{aligned}
\end{equation}
where 
\begin{equation}\label{eq:Mpsi}
M[\psi(t,\cdot)]=\int_{\R^2} x \otimes x \, \psi(t,x) \, dx.
\end{equation}
We will see in the next section that this particular choice~\eqref{eq:choix1}
of pair~$(R_t,\lambda)$ may be motivated by modeling considerations. This
turns out to be the choice we advocate.

An alternate convenient pair (among many possible choices)
would be to set
\begin{equation}\label{eq:choix2}
R_tR_t^T= \Id - \frac{M}{\tr(M)} \text{ and } \lambda=(d-1).
\end{equation}
Using the fact that $M \le \tr(M) \Id$ (in the sense of symmetric matrices), the existence of such a matrix $R_t$ follows from Cholesky factorization, for example. Then, the associated Fokker-Planck equation would write
\begin{equation}\label{eq:FP2}
\begin{aligned}
\frac{\partial \psi}{\partial t} &=
\div \left( \left( - \kappa x + \frac{\kappa:M[\psi]}{\tr(M[\psi])} x \right) \psi \right)\\
 &\quad+ 4N
\div \left( \left( - M[\psi] x + \frac{M[\psi]:M[\psi]}{\tr(M[\psi])} x \right) \psi \right)\\
& \quad+ \Delta \psi - \frac{M[\psi]}{\tr(M[\psi])} : \nabla^2 \psi + (d-1) \div \left( \frac{x}{\tr(M[\psi])} \psi \right),
\end{aligned}
\end{equation}
where $M[\psi]$ is defined by~\eqref{eq:Mpsi}. We have not been able to motivate the alternate choice~\eqref{eq:choix2}
as convincingly as the choice~\eqref{eq:choix1}, and will
show that our preferred choice~\eqref{eq:choix1} enjoys several agreeable
properties. We will therefore henceforth adopt~\eqref{eq:choix1}.

\subsection{A possible justification of our approximation~\eqref{eq:Xt_modif}}
\label{sec:proj_moy}

To derive an appropriate approximation of Equation~\eqref{eq:Xt}, we now follow a different
path. Since~\eqref{eq:Xt} is the projection  on the manifold defined by the
constraint "$\|X_t\|^2$ constant" of an original dynamics 
\begin{equation}
  \label{eq:original-dynamics}
dX_t = \left( \kappa X_t + 4N \E(X_t \otimes X_t) X_t \right) \, dt + \sqrt{2} dB_t
\end{equation}
visiting the whole space $\R^d$, we may consider an approximation
of~\eqref{eq:Xt} as a projection of the same original dynamics on a
"manifold" defined by the constraint "$\E\|X_t\|^2$ constant". 
The difficulty is that giving a mathematical meaning to the latter
constraint is not straightforward. The aim of this section is to give a proper meaning to this projection, and to identify the projected dynamics with the dynamics~\eqref{eq:Xt_modif}--\eqref{eq:choix1} that leads to the Doi closure. To keep our exposition simple, we omit the nonlinear term in the drift term of~\eqref{eq:original-dynamics} (namely $N =0$). The reasoning below generalizes to the full drift.

The approach we propose is to consider $I \ge 1$ replicas 
$$dX^i_t = \kappa X^i_t \, dt +  \sqrt{2} dB^i_t,$$
(for $1\leq i\leq I$) 
of the dynamics~\eqref{eq:original-dynamics} and project the system
obtained on the manifold
$$\frac{1}{I} \sum_{i=1}^I \|X^i_t\|^2=L^2.$$
We thus impose that the empirical average is constant, and we are interested in the limit $I \to \infty$.
Of course, the $d$-dimensional Brownian motions $B^i_t$ are assumed to be independent.
The projection is performed using the
\emph{D'Alembert Principle}. Indeed, the constraining force does not bring or subtract energy from the system: it is directed orthogonally to the submanifold on which the constrained system evolves. More precisely, denoting by $\XX_t=(X^1_t, \ldots , X^I_t)
\in \R^{d I}$, the projected dynamics writes (see for example~\cite{ciccotti-lelievre-vanden-einjden-08,le-bris-lelievre-vanden-eijnden-08}):
\begin{equation}\label{eq:sde_proj}
d \XX_t = P(\XX_t) \KK \XX_t\, dt + \sqrt{2} P(\XX_t) d \BB_t - (d I-1) \frac{\XX_t}{\|\XX_t\|^2} dt,
\end{equation}
where $\KK$ is the $dI \times dI$ block diagonal matrix composed of the
blocks $\kappa$ of size $d \times d$, and $P(\XX)$ is still defined by~\eqref{eq:P}, with $\XX \in \R^{dI}$. We fix 
\begin{equation}\label{eq:IC1}
\frac{1}{I} \sum_{i=1}^I\|X^i_0\|^2=L^2,
\end{equation}
at initial time and this quantity is by construction preserved in time. We also
assume that the random variables $X^i_0$ are identically distributed so that, from~\eqref{eq:IC1},
\begin{equation}\label{eq:IC2}
\E(\|X^1_0\|^2)=L^2.
\end{equation}
As mentioned above, Equation~\eqref{eq:Xt} is recovered using this projection procedure with only one replica: $I=1$. Here, we consider the limit $I \to \infty$.

We now pick the first component $X^1_t \in \R^d$ of our vector $\XX_t$ and
consider its evolution equation
$$dX^1_t = \kappa X^1_t \, dt + \sqrt{2} d B^1_t - X^1_t \frac{\XX_t \cdot \KK \XX_t}{\|\XX_t\|^2} \, dt - X^1_t \frac{\XX_t \cdot d\BB_t}{\|\XX_t\|^2} - (d I - 1) \frac{X^1_t}{\|\XX_t\|^2} dt.$$
Since $\|\XX_t\|^2=I \, L^2$, this also writes
$$dX^1_t = \kappa X^1_t \, dt + \sqrt{2} d B^1_t - \frac{X^1_t}{L^2} \frac{1}{I} \sum_{i=1}^I X^i_t \cdot \kappa X^i_t \, dt - \frac{X^1_t}{L} \frac{1}{\sqrt{I}} \frac{\XX_t \cdot d\BB_t}{\|\XX_t\|} - \left( d - \frac{ 1}{I}\right) \frac{X^1_t}{L^2} dt.$$
In the limit~$I \to \infty$, we formally obtain that $X^1_t$ converges to $Y_t$ solution to
\begin{equation}\label{eq:Y}
dY_t = \kappa Y_t \, dt + \sqrt{2} d B^1_t  - \frac{Y_t}{L^2} \, \E(Y_t \cdot \kappa Y_t) \, dt - d \,\frac{Y_t}{L^2} dt.
\end{equation}
This limit may be rigorously justified as follows.
\begin{prop}\label{prop:doi}
Let $\XX^I_t=(X^{1,I}_t, \ldots,X^{I,I}_t) \in \R^{dI}$ be a solution to~\eqref{eq:sde_proj} (we here explicitly indicate in the superscript the dependence on the number of replicas $I$) and $Y_t$ solution to~\eqref{eq:Y}. The initial condition $Y_0$ is assumed to satisfy
$$\E(\|Y_0\|^2)=L^2$$
and $\E(\|Y_0\|^8)< \infty$. Consider i.i.d. copies $Y^i_0$ of $Y_0$ and define the initial condition
$$X^{i,I}_0=L\,Y^i_0\,\left(\frac{1}{I} \sum_{i=1}^I \|Y^i_0 \|^2\right)^{-1/2}$$
so that $(X^{i,I}_0)_{1 \le i \le I}$ are identically distributed random variables satisfying~\eqref{eq:IC1}. Then, for any positive time $T > 0$, there exists $C >0$ such that, for all positive $I \in \N$,
$$\E\left( \sup_{0 \le t \le T} \| X^{1,I}_t - Y_t \|^2 \right) \le \frac{C}{I}.$$
\end{prop}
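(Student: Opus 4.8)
The plan is to establish a propagation-of-chaos estimate by a coupling argument, comparing the first component $X^{1,I}_t$ of the interacting system~\eqref{eq:sde_proj} with $Y_t$ driven by the same Brownian motion $B^1_t$. The natural intermediate object is an auxiliary system of $I$ i.i.d. copies $\widetilde Y^i_t$ of $Y_t$, each driven by $B^i_t$ with initial data $Y^i_0$; one then controls $\E(\sup_{t\le T}\|X^{1,I}_t-\widetilde Y^1_t\|^2)$. First I would rewrite the equation satisfied by $X^{1,I}_t$ in the form displayed just before the statement, namely
\begin{align*}
dX^{1,I}_t &= \kappa X^{1,I}_t\,dt + \sqrt2\,dB^1_t - \frac{X^{1,I}_t}{L^2}\,\frac1I\sum_{i=1}^I X^{i,I}_t\cdot\kappa X^{i,I}_t\,dt\\
&\quad - \frac{X^{1,I}_t}{L}\,\frac{1}{\sqrt I}\,\frac{\XX^I_t\cdot d\BB_t}{\|\XX^I_t\|} - \Bigl(d-\frac1I\Bigr)\frac{X^{1,I}_t}{L^2}\,dt,
\end{align*}
using crucially that $\|\XX^I_t\|^2 = IL^2$ is conserved. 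Subtracting the equation~\eqref{eq:Y} for $\widetilde Y^1_t$ and introducing $e^i_t = X^{i,I}_t - \widetilde Y^i_t$, one gets an SDE for $e^1_t$ whose right-hand side splits into (a) Lipschitz-type terms in $e^1_t$ with bounded (deterministic or conserved) coefficients, coming from $\kappa X^{1,I}_t - \kappa \widetilde Y^1_t$ and from the $(d-1/I)X^{1,I}_t/L^2$ versus $d\,\widetilde Y^1_t/L^2$ comparison; (b) the "law of large numbers" discrepancy $\frac1I\sum_i X^{i,I}_t\cdot\kappa X^{i,I}_t - \E(\widetilde Y^1_t\cdot\kappa\widetilde Y^1_t)$, which I would further split as $\frac1I\sum_i(X^{i,I}_t\cdot\kappa X^{i,I}_t - \widetilde Y^i_t\cdot\kappa\widetilde Y^i_t)$ plus the centered empirical fluctuation $\frac1I\sum_i(\widetilde Y^i_t\cdot\kappa\widetilde Y^i_t - \E(\widetilde Y^i_t\cdot\kappa\widetilde Y^i_t))$; (c) the genuinely $O(1/\sqrt I)$ extra martingale term $-\frac{X^{1,I}_t}{L\sqrt I}\,\frac{\XX^I_t\cdot d\BB_t}{\|\XX^I_t\|}$; and (d) the $O(1/I)$ term $\frac{X^{1,I}_t}{IL^2}\,dt$.

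Next I would derive a priori moment bounds. Since $\|X^{i,I}_t\|^2 \le IL^2$ for every $i$, one has $\|X^{1,I}_t\|\le L\sqrt I$ trivially, but more usefully $\E\|X^{1,I}_t\|^{2p}$ should be bounded uniformly in $I$ and $t\le T$ for $p\le 4$: this follows because, by exchangeability of the $X^{i,I}_t$ and the conservation $\frac1I\sum_i\|X^{i,I}_t\|^2 = L^2$, we get $\E\|X^{1,I}_t\|^2 = L^2$ exactly, and higher moments are controlled by an Itô/Gr\"onwall estimate on $\E\|X^{1,I}_t\|^{2p}$ using the structure of~\eqref{eq:sde_proj} (the projector and the Maier-Saupe-free drift only involve $\kappa$ linearly). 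Correspondingly $\E\|\widetilde Y^1_t\|^{2p}$ is bounded for $p\le 4$ using $\E\|Y_0\|^8 < \infty$ and the linear-growth structure of~\eqref{eq:Y}; the hypothesis $\E\|Y_0\|^8<\infty$ is there precisely to make these fourth-moment-of-the-square bounds available, which are needed because the bad term $X^{i,I}_t\cdot\kappa X^{i,I}_t$ is quadratic, so its variance involves fourth moments. Then, applying Itô to $\|e^1_t\|^2$, taking $\sup_{s\le t}$, using the Burkholder--Davis--Gundy inequality on all martingale contributions (including the $O(1/\sqrt I)$ term in (c), whose quadratic variation is $O(\|X^{1,I}_s\|^2/I)\,ds = O(L^2)\,ds$ after conservation, giving an $O(1/I)$ contribution to $\E\sup\|e^1\|^2$ after BDG — one must be a little careful here), and using exchangeability to replace $\frac1I\sum_i\E\sup_{s\le t}\|e^i_s\|^2$ by $\E\sup_{s\le t}\|e^1_s\|^2$ in term (b)(i), one arrives at
$$\E\Bigl(\sup_{0\le s\le t}\|e^1_s\|^2\Bigr) \le C\int_0^t \E\Bigl(\sup_{0\le r\le s}\|e^1_r\|^2\Bigr)\,ds + \frac{C}{I},$$
where the $C/I$ collects: the initial-condition error $\E\|X^{1,I}_0 - Y^1_0\|^2 = \E\|Y^1_0\|^2|L(\frac1I\sum_i\|Y^i_0\|^2)^{-1/2} - 1|^2$, which is $O(1/I)$ by the standard CLT-type bound $\E|\frac1I\sum_i\|Y^i_0\|^2 - L^2|^2 = O(1/I)$ (needing $\E\|Y_0\|^4<\infty$); the empirical-fluctuation term (b)(ii), whose time-integrated second moment is $O(1/I)$ by independence and the fourth-moment bound; the martingale term (c); and term (d). Gr\"onwall's lemma then closes the estimate, and finally $\E(\sup_{t\le T}\|X^{1,I}_t - Y_t\|^2) \le 2\E(\sup\|e^1_t\|^2) + 2\E(\sup_{t\le T}\|\widetilde Y^1_t - Y_t\|^2)$, where the second term vanishes identically once we take $\widetilde Y^1 = Y$ by choosing $Y^1_0 = Y_0$ and the same $B^1$ — indeed one should simply set up the coupling so that $\widetilde Y^1_t \equiv Y_t$ from the start, so only $e^1$ needs to be estimated.

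The main obstacle I anticipate is the careful handling of the singular division by $\|\XX^I_t\|^2 = IL^2$ in the drift and, relatedly, controlling the $O(1/\sqrt I)$ stochastic term $-\frac{X^{1,I}_t}{L\sqrt I}\frac{\XX^I_t\cdot d\BB_t}{\|\XX^I_t\|}$: although it is formally small, its interaction with the quadratic term $X^{i,I}_t\cdot\kappa X^{i,I}_t$ under Itô corrections, and the need to keep all constants independent of $I$, requires the uniform-in-$I$ fourth-moment bounds on $\|X^{1,I}_t\|^2$ that are not entirely immediate (the projector is only $1$-Lipschitz away from the origin, but the origin is excluded by the norm conservation $\|\XX^I_t\| = L\sqrt I > 0$, which rescues the argument). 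A secondary delicate point is that exchangeability of $(X^{i,I}_t)_i$ must be invoked correctly — it holds because the $Y^i_0$ are i.i.d., the $B^i$ are i.i.d., and~\eqref{eq:sde_proj} is symmetric under permutation of the blocks — in order to reduce the coupled system of $I$ error estimates to a single scalar Gr\"onwall inequality. Everything else is a routine, if somewhat lengthy, Itô--BDG--Gr\"onwall computation.
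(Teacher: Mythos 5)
Your overall strategy — couple $X^{1,I}_t$ to an auxiliary i.i.d. system $Y^i_t$ driven by the same Brownian motions, split the discrepancy into a self-term, a law-of-large-numbers fluctuation, and $O(1/\sqrt I)$ martingale and $O(1/I)$ drift remainders, then close by Burkholder--Davis--Gundy and Gr\"onwall — is exactly the route the paper takes, and you correctly locate the role of $\E\|Y_0\|^8<\infty$ and of exchangeability. Two of your intermediate technical claims, however, do not hold as stated and are not what the paper actually uses.

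First, you propose to establish uniform-in-$I$ bounds on $\E\|X^{1,I}_t\|^{2p}$ for $p\le 4$. This is neither available nor needed. It is not available because the initial datum $X^{1,I}_0 = L\,Y^1_0\bigl(\frac1I\sum_i\|Y^i_0\|^2\bigr)^{-1/2}$ involves a reciprocal that can be close to zero, so $\E\|X^{1,I}_0\|^{2p}$ for $p>1$ is not obviously finite, let alone uniformly bounded in $I$, under the stated hypotheses; no Gr\"onwall argument on moments can start from an initial bound one does not have. It is not needed because the paper uses only $\E\|X^{i,I}_t\|^2 = L^2$ (exact, by exchangeability) together with the pathwise identity $\frac1I\sum_j\|X^{j,I}_s\|^2 = L^2$, which eliminates the $X$-dependence from every empirical quadratic average: for instance $\bigl|\frac1I\sum_j X^{j,I}_s\cdot\kappa X^{j,I}_s\bigr| \le \|\kappa\|L^2$, so your term~(b)(i) is controlled with \emph{no} higher moments of $X$. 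The remaining delicate term, a product of three empirical averages, is handled by a concentration inequality, splitting on the event $\{\frac1I\sum_j\|Y^j_s\|^2 \le 2L^2\}$ and bounding the complement by a fourth-moment Markov estimate using $\sup_{s\le T}\E\|Y_s\|^8<\infty$. Likewise, the initial-condition error is not estimated by expanding the reciprocal: exchangeability gives the algebraic cancellation $\E\|Z^{i,I}_0\|^2 = \E\bigl(L - \sqrt{\frac1I\sum_j\|Y^j_0\|^2}\bigr)^2 \le \var(\|Y_0\|^2)/(I L^2)$, which only needs $\E\|Y_0\|^4<\infty$.

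Second, your estimate of the extra martingale~(c) is internally inconsistent. The pathwise quadratic variation bound you quote, $\|X^{1,I}_s\|^2/(L^2 I)\,ds = O(L^2)\,ds$, is $O(1)$ (since $\|X^{1,I}_s\|^2\le I L^2$ pathwise), and BDG applied to that pathwise bound yields only $O(T)$, not the $O(1/I)$ you conclude with. To get the factor $1/I$ one must pass the expectation \emph{inside} the time integral before invoking Doob/BDG and use $\E\|X^{1,I}_s\|^2 = L^2$ there, which is precisely what the paper does. You flag that ``one must be a little careful here,'' and indeed this is the point that needs repair. Both issues are fixable, and the fixes are the paper's mechanisms; but as written, neither the higher-moment claim on $X$ nor the martingale estimate is correct.
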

The proof is provided in the appendix. Of course, the convergence result holds for any component $X^i_t$ of
the vector $\XX_t$, and by standard results in propagation of chaos~\cite{sznitman-91}, we actually have that any subset of components $(X^{i_1}_t, \ldots X^{i_k}_t)$ converges (in the limit $I \to \infty$) to $(Y^{i_1}_t, \ldots, Y^{i_k}_t)$, where the processes $(Y^i_t)$ are independent copies of $Y_t$ solution to~\eqref{eq:Y}.

Notice that since $\E(\|Y_t\|^2)=L^2$, we may therefore equally well write the dynamics
on~$Y_t$ in the following form
$$dY_t = \kappa Y_t \, dt + \sqrt{2} d B^1_t  -
\frac{Y_t}{\E(\|Y_t\|^2)} \E(Y_t \cdot \kappa Y_t) \, dt - d
\frac{Y_t}{\E(\|Y_t\|^2)} dt.$$
This agrees with~\eqref{eq:Xt_modif} for $(R=\Id,\lambda=d)$, thereby
providing a justification of our particular choice~\eqref{eq:choix1} in the previous
section.

To summarize, the original model for rigid rods~\eqref{eq:Xt} may be seen as a projection of the dynamics~\eqref{eq:original-dynamics} onto the submanifold $\|X_t\|^2=\|X_0\|^2$, while the approximated model~\eqref{eq:Xt_modif} for $(R=\Id,\lambda=d)$ which is consistent with the Doi closure~\eqref{eq:doi_closure} may be seen as  the
original dynamics~\eqref{eq:original-dynamics} constrained to have a
fixed {\em average} length: $\E \left(\|X_t\|^2\right)=\E\left(\|X_0\|^2\right)$. This yiels a microscopic interpretation of the Doi closure.

\section{Long-time behaviour of our approximate model}
\label{sec:convergence}
We henceforth consider the model~\eqref{eq:Xt_modif}--\eqref{eq:choix1} (namely $(R=\Id,\lambda=d)$) which we have built from the original
model~\eqref{eq:Xt}  by approximation. Throughout this section, we work
in dimension $d=2$. This is a crucial assumption, specifically needed for
our technique of proof which makes use of the Poincar\'e-Bendixson
Theorem. Additionally, we assume that the matrix $\kappa$ is defined by~\eqref{eq:kappa}:
\begin{equation*}
  \kappa=\frac{\Pe}{2}
\left[
\begin{array}{cc}
0 & a+1 \\
a-1 & 0
\end{array}
\right]
\end{equation*} 
 and that the initial condition $X_0$ satisfies $\E(\|X_0\|^2)=L^2=1$
 (so that, for all positive time, $\E(\|X_t\|^2)=1$). Given these
 assumptions,  we now recall, for the convenience of our reader and
the consistency of the present section, the model under study:  
\begin{equation}\label{Eq:Xt}
\begin{aligned}
dX_t &= \left( \kappa X_t  -  \kappa: \E( X_t \otimes X_t)  \, X_t \right)\, dt \\
& \quad   + 4N \left( \E(X_t \otimes X_t) X_t - \E( X_t \otimes X_t ): \E(X_t \otimes X_t) \, X_t   \right) \, dt \\
& \quad + \sqrt{2} \, dB_t - 2\, X_t \, dt.
\end{aligned}
\end{equation}
We also recall that the conformation tensor $M(t)=\E(X_t \otimes X_t)$ then satisfies the ordinary differential equation:
\begin{equation}\label{Eq:M}
\begin{aligned}
\frac{d M}{dt}&= \left( \kappa M + M \kappa^T - 2
\kappa : M  \, M \right)\\
&\quad+ 8 N \left( M^2 - M:M \, M \right) \\
& \quad +  4 (\Id/2 -  M),
\end{aligned}
\end{equation}
where $\tr(M(t))=\tr(M(0))=1$. The (non-linear) Fokker-Planck
formulation associated to~\eqref{Eq:Xt} and established in~\eqref{eq:FP1} writes:
\begin{equation}\label{Eq:FP}
\begin{aligned}
\frac{\partial \psi}{\partial t} &=
\div \left( \left( - \kappa x + \kappa:M[\psi] x \right) \psi \right)\\
 &\quad+ 4N
\div \left( \left( - M[\psi] x + M[\psi]:M[\psi] x \right) \psi \right)\\
& \quad + \Delta \psi + 2 \,\div \left( x \psi \right),
\end{aligned}
\end{equation}
where
$$M[\psi(t,\cdot)]=\int_{\R^2} x \otimes x \, \psi(t,x) \, dx$$ and $\tr(M[\psi(0,\cdot)])=1$. Notice that $t \mapsto M[\psi(t,\cdot)]$ then satisfies~\eqref{Eq:M}. 
The aim of this section is to study the longtime behaviour of the
solution $\psi$ to the Fokker-Planck equation~\eqref{Eq:FP}.

As it is standard for such analysis, we study the longtime behaviour of a  solution to the Fokker-Planck equation~\eqref{Eq:FP}, assumed sufficiently regular for our manipulations to be valid. We refer for example to~\cite{arnold-markowich-toscani-unterreiter-01} for an appropriate functional setting to justify such calculations.

\subsection{Long-time convergence of the solution to
  \eqref{Eq:M} to a periodic solution}
\label{ssec:existence}

We first consider the closed ordinary differential equation~\eqref{Eq:M}
on~$M$, momentarily leaving~\eqref{Eq:FP} aside. We will show that, under some assumptions on the parameters and the initial condition $M(0)$, $M$ converges, in the long time limit, to a periodic solution. This is an extension of the result~\cite[Theorem 5.1]{lee-forest-zhou-06}. We also refer to that contribution for a more thorough study of the longtime behaviour of the dynamical system, for other regimes of the parameters.
\begin{prop}
\label{prop:exist-periodic}
 Assume that $\Pe$ is sufficiently small, $|a|<1$ and
 $N>\frac{1}{1-a^2}$. Then there exists an open subset $\Omega$ of the
 ensemble of  positive definite matrices with trace one ($\Omega$ will be made precise in the course of the proof below), such that:
\begin{itemize}
\item There exists a unique periodic-in-time
  function~ $M_{per}(t)$, valued in $\Omega$, solution to~\eqref{Eq:M};
\item For any initial condition $M(0) \in \Omega$, the solution $M(t)$
  to~\eqref{Eq:M} converges to $M_{per}(t)$ exponentially fast in the
  long time, that is: there exist $C, \lambda >0$, such that, for all $t \ge 0$,
\begin{equation}\label{eq:exp_CV} 
\|M(t)-M_{per}(t) \| \le C \exp (-\lambda t).
\end{equation}
\end{itemize}
\end{prop}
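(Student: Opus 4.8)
The plan is to reduce the study of~\eqref{Eq:M} to a planar autonomous system to which the Poincar\'e--Bendixson theorem applies, following~\cite{lee-forest-zhou-06}. Since Proposition~\ref{prop:sym/trace} guarantees that $M(t)$ stays symmetric with $\tr(M(t))=1$, I would first parametrize the relevant state space. Writing $Q=M-\Id/2$ for the traceless symmetric part, $Q$ lives in a two-dimensional vector space; a convenient choice is $Q=\left[\begin{smallmatrix} q_1 & q_2 \\ q_2 & -q_1\end{smallmatrix}\right]$, so~\eqref{Eq:M} (equivalently~\eqref{eq:Q}) becomes an autonomous ODE $\dot{q}=F(q)$ on $\R^2$, with $F$ a polynomial (indeed cubic) vector field. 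The constraint that $M$ be positive definite translates into $\|q\|<1/2$, i.e.\ $q$ ranges in the open disk $\mathcal{D}$ of radius $1/2$; I would check that $F$ points strictly inward on $\partial\mathcal{D}$ (at worst for $\Pe$ small, treating the $\Pe$ terms as a perturbation of the $\Pe=0$ gradient-type dynamics), so that $\mathcal{D}$, or a slightly smaller invariant annulus, is positively invariant. This would supply a compact forward-invariant region, a prerequisite for Poincar\'e--Bendixson.

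Next I would analyze the equilibria of $F$ inside $\mathcal{D}$. At $\Pe=0$ the system is, up to the isotropic fixed point $Q=0$, rotationally covariant, and the Maier--Saupe term under the hypothesis $N>\frac{1}{1-a^2}>1$ creates a circle of nematic (uniaxial) equilibria at some radius $r_*\in(0,1/2)$; one checks $Q=0$ is then a repeller (its linearization has a positive eigenvalue when $N>1$). Turning on a small $\Pe\neq 0$ and the shape parameter $a$ with $|a|<1$: the continuum of equilibria generically breaks, and by an implicit-function-theorem/perturbation argument one shows that for $\Pe$ small all equilibria of $F$ in $\mathcal{D}$ have been pushed off the physically-realized region, or become hyperbolic unstable (spiral sources), so that there is \emph{no} stable equilibrium in the closure of an invariant annulus $\mathcal{A}=\{r_1\le\|q\|\le r_2\}\subset\mathcal{D}$ containing no fixed point. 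Then the Poincar\'e--Bendixson theorem applied on $\mathcal{A}$ yields the existence of at least one periodic orbit $\Gamma=\{M_{per}(t)\}$.

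For uniqueness of the periodic orbit and exponential attraction, I would compute the Floquet/divergence behaviour. The standard tool is the divergence (Bendixson--Dulac): evaluate $\operatorname{div} F$ and show that on $\mathcal{A}$, for $\Pe$ small, $\operatorname{div} F<0$ (the $-4Q$ linear dissipation and the cubic Maier--Saupe terms dominate, the skew $\Pe\Omega$ terms being divergence-free at leading order and small otherwise); a strictly negative divergence on a region containing the periodic orbit forces the orbit to be unique within $\mathcal{A}$, hyperbolic and exponentially stable, with the Floquet exponent $\int_0^{T}\operatorname{div}F(M_{per}(s))\,ds<0$ giving the rate $\lambda$ in~\eqref{eq:exp_CV}. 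I would then \emph{define} $\Omega$ to be the basin of $\Gamma$ inside $\mathcal{A}$ (an open set of positive definite trace-one matrices, e.g.\ $\mathcal{A}$ minus the unstable manifold of any remaining fixed point, or simply $\operatorname{int}\mathcal{A}$ if the annulus has been chosen fixed-point free); the contraction estimate~\eqref{eq:exp_CV} with uniform $C,\lambda$ then follows from hyperbolicity of $\Gamma$ and a Gr\"onwall argument linearizing $\dot M=$ RHS of~\eqref{Eq:M} along $M_{per}$.

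The main obstacle I expect is the quantitative perturbation argument in $\Pe$: making precise, uniformly in the region, that for all sufficiently small $\Pe$ there is a genuine fixed-point-free invariant annulus (equivalently, that the nematic equilibria of the $\Pe=0$ problem are destroyed rather than merely perturbed to nearby stable equilibria) and that $\operatorname{div}F<0$ there. This requires carefully identifying the $\Pe=0$ phase portrait under $N>\frac{1}{1-a^2}$, controlling the size and sign of the $O(\Pe)$ corrections to the equilibrium set and to the divergence, and possibly appealing to the explicit form of the limit cycle exhibited in~\cite{lee-forest-zhou-06} rather than an abstract argument — which, as the introduction concedes, is why the proof is tied to $d=2$ and to an explicit $M_{per}$.
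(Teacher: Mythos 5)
Your overall strategy matches the paper's: reduce~\eqref{Eq:M} via $Q=M-\Id/2$ to a planar cubic vector field, build a forward-invariant annulus, rule out fixed points there, invoke Poincar\'e--Bendixson for existence, then use a strictly negative divergence (Bendixson--Dulac plus the Floquet-multiplier formula) for uniqueness and exponential attraction, with $\Omega$ the annulus read back in matrix coordinates. Two places where your sketch falls short of a proof, and where the paper does something sharper, are worth flagging.

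First, your argument that the annulus contains no fixed point is the weak link. You propose an implicit-function-theorem perturbation from the $\Pe=0$ circle of nematic equilibria, but that circle is a \emph{degenerate} one-parameter family (the linearization of $F$ there has a zero eigenvalue along the circle), so the IFT does not directly say the equilibria disappear rather than merely drift; and even if it did, you would need a uniform statement over the whole annulus. The paper sidesteps this entirely by passing to polar coordinates $(r,\varphi)$ and noticing that
$$\frac{d\varphi}{dt}=-\Pe\Bigl(1-\frac{a}{2r}\cos\varphi\Bigr)<0$$
throughout the annulus, precisely because $N>\frac{1}{1-a^2}$ forces $|a|/(2r)<1$ there. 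This single inequality rules out all fixed points at once and also furnishes the monotone reparametrization by $\varphi$ used later. You should find and use this observation; without it the ``no fixed point'' step is genuinely incomplete.

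Second, your final step (``hyperbolicity of $\Gamma$ plus Gr\"onwall'') is waved where real work is needed. Linearizing along $M_{per}$ and applying Gr\"onwall only controls the distance to the orbit as a set, not convergence to the time-parametrized trajectory $M_{per}(t)$: two nearby trajectories on and off the cycle can drift in phase. The paper handles this concretely: it introduces the Poincar\'e section $S$, proves the return points converge geometrically at rate $\tilde\rho$ close to $\rho=\exp\bigl(\int_0^T \operatorname{div}F\bigr)$, shows the return times $t_{k+1}-t_k$ converge to $T$ (again via the $\varphi$-reparametrization), deduces $|t_k-T_0-kT|\le C\tilde\rho^k$ for some phase shift $T_0$, and only then uses Lipschitz dependence of the flow to get~\eqref{eq:exp_CV}. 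A minor misreading: the ``explicit expression'' the introduction refers to is the Gaussian form of $\psi_{per}$ in Proposition~\ref{prop:gauss}, not an explicit formula for $M_{per}(t)$; the latter is obtained abstractly via Poincar\'e--Bendixson exactly as you envisage.
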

The remainder of this section is devoted to the proof of
Proposition~\ref{prop:exist-periodic}.

\begin{proof}

\noindent{\it Step 1: Existence of a periodic-in-time solution}

\noindent  Using Proposition~\ref{prop:sym/trace}, we know that the solution $M(t)$ to
 \eqref{eq:M} is symmetric and satisfies
 $\tr(M(t))=1$ since this holds true at initial time. Introducing as in Section~\ref{sec:forest} the
 traceless part $Q=M-\Id / 2$ of~$M$, we may always write $Q$, in the two
 dimensional setting we consider, under the form
\begin{eqnarray*}  
Q(t)=\left[ \begin{array}{cc}
x(t)&y(t)\\
y(t)&-x(t)\\
\end{array}\right].
      \end{eqnarray*}
 Now the evolution equation \eqref{eq:Q} equivalently reads
 \begin{equation}\label{o1}  
\left\{ 
\begin{aligned} 
\f{dx}{dt}&=-4x \left( 1- N + 4N(x^2+y^2) \right) + \Pe \, y (1 -2ax),\\
 \f{dy}{dt}&=-4y \left( 1- N + 4N(x^2+y^2)\right)+\Pe \left(-x+\f{a}{2}-2ay^2\right). \end{aligned}\right.
 \end{equation}
Introducing the polar coordinates
\begin{eqnarray}
\label{eq:polar}
  \left\{ \begin{aligned} &x=r \cos \varphi,\\
 &y=r \sin \varphi, \end{aligned}\right. \end{eqnarray}
we rewrite~\eqref{o1} in the form 
\begin{equation}\label{o2}  
\left\{ 
\begin{aligned}
\f{d\varphi}{dt}&=- \Pe\left(1-\f{a}{2r}\cos \varphi\right),\\
 \f{dr}{dt}&=- 4 r  \left(  N(4 r^2-1)+1 \right)+ \frac{a \Pe}{2}(1-4 r^2)\sin \varphi.  
\end{aligned}
\right.
 \end{equation}

We now consider two positive constants $\epsilon_1$ and
$\epsilon_2$  satisfying $0<\epsilon_1<\frac{N-1}{4N}$ and
$0<\epsilon_2<\frac{1}{4N}$ respectively. Set
$r_1=\sqrt{\frac{N-1}{4N}-\epsilon_1}$ and
$r_2=\sqrt{\frac{N-1}{4N}+\epsilon_2}$. Note that, by construction,
$0<r_1\leq r_2<\frac{1}{2}$. Then, if $r_1\le r(0) \le r_2$, one has
\begin{equation*}
r_1\le r(t)\le r_2
\end{equation*}
for all positive times, provided that $\Pe$ is sufficiently small.
Indeed, it is easy to check that if $\Pe < \overline{\Pe}$, where 
$$\overline{\Pe}= \frac{16   N  \epsilon_1} { |a| }\,\min\left(
  \frac{r_1} {\frac{1}{N}+4\epsilon_1}, \frac{r_2}{
    \frac{1}{N}-4\epsilon_2} \right) >0,$$ then for all angles~$\varphi$,
\begin{equation*}
- 4 r_1  \left(  N(4 r_1^2-1)+1 \right)+ \frac{a \Pe}{2}(1-4 r_1^2)\sin \varphi = 
16 r_1  N  \epsilon_1+ \frac{a \Pe}{2}  \left(\frac{1}{N}+4\epsilon_1\right)\sin \varphi 
>0
\end{equation*}
and likewise,
\begin{equation*}
- 4 r_2  \left(  N(4 r_2^2-1)+1 \right)+ \frac{a \Pe}{2}(1-4 r_2^2)\sin \varphi = 
- 16 r_2  N  \epsilon_2+ \frac{a \Pe}{2}  \left(\frac{1}{N}-4\epsilon_2\right)\sin \varphi <0.
\end{equation*}
This shows that $\displaystyle \frac{dr}{dt}\biggl|_{r=r_1}>0$ and
$\displaystyle \frac{dr}{dt}\biggl|_{r=r_2}<0$, thus the
annulus $$\widetilde{\Omega}=\left\{(x,y), r_1 < \sqrt{x^2 +y^2} <
  r_2\right\}$$ is stable under the flow (for positive time). The domain
$\Omega$ mentioned above in the Proposition~\ref{prop:exist-periodic} is
now made precise and defined by:
$$(x,y) \in \widetilde{\Omega} \iff M=\left[ \begin{array}{cc}
x&y\\
y&-x\\
\end{array}\right] + \frac{\Id}{2} \in \Omega.$$

We now want to show that there is no stationary point in the annulus
$\widetilde{\Omega}$. Since, by assumption, $N > \frac{1}{1-a^2}$, we may assume that $\epsilon_1>0$ is chosen sufficiently small so that 
$$N > \frac{1}{1-4 \epsilon_1 -a^2}$$
which is equivalent to $\displaystyle\frac{|a|}{2r} < 1$ in $\widetilde{\Omega}$. This implies that \begin{equation}\label{eq:phidotneg}
\displaystyle\frac{d \varphi }{dt} = - \Pe\left(1-\f{a}{2r}\cos \varphi\right) <0
\end{equation}
since the solution remains in $\widetilde{\Omega}$, and thus that there is no stationary point in $\widetilde{\Omega}$.

From the Poincar\'e-Bendixson Theorem (see for example~\cite[Theorem
6.12]{meiss-07}), we then obtain that, for any trajectory with initial condition $(x_0,y_0)$ in
  $\widetilde{\Omega}$, its $\omega$-limit set is a periodic orbit, that
  is, the trajectory of a periodic solution. We recall, for consistency, that a point $(x_\infty,y_\infty)$ is in the $\omega$-limit set of the (forward) trajectory starting from $(x_0,y_0)$ if there is an increasing sequence of times $t_n$ going to infinity and such that $(x(t_n),y(t_n))$ converges to $(x_\infty,y_\infty)$ as $n$ goes to infinity.

\noindent A corollary of the previous statement is that there exists a periodic solution $(x_{per},y_{per})(t)$ to the equation~\eqref{o1} in $\widetilde{\Omega}$, and thus an associated periodic solution $M_{per}(t)$ to~\eqref{Eq:M},

\medskip

\noindent{\it Step 2: Properties of the periodic-in-time solution}

\noindent In order to prove the long time convergence to the
periodic solution and make precise the rate of that convergence, we now compute the divergence of the vector field in the right-hand-side of~\eqref{o1}: for  $(x,y) \in \widetilde{\Omega}$,
\begin{align*}
D(x,y)&=\frac{\partial}{\partial x} \left( -4x \left( 1- N + 4N(x^2+y^2) \right) + \Pe \, y (1 -2ax) \right) \\
& \quad+ \frac{\partial}{\partial y} \left(-4y \left( 1- N + 4N(x^2+y^2)\right)+\Pe \left(-x+\f{a}{2}-2ay^2\right)\right) \\
&= 8  \left( N - 1 \right)   - 64 N (x^2 + y^2) - 6 \Pe \,  a y\\
& \le 8  \left( N - 1 \right) - 64 N r_1^2 + 6 \Pe \, a \, r_2 \\
& = - 8  \left( N - 1 \right)  + 64 N\epsilon_1  + 6 \Pe \, a r_2.
\end{align*}
We thus see that if $\epsilon_1$ and $\Pe$ are chosen sufficiently small, then
$$D(x,y) < 0$$
in $\widetilde{\Omega}$. Following~\cite{kuznetsov-98,meiss-07}, we deduce that the equation~\eqref{o1} has a {\em unique stable} periodic orbit in $\widetilde{\Omega}$. 
The uniqueness follows from a generalization of the Dulac criterion. Let us recall the main arguments for the stability
statement. We introduce the Poincar\'e map associated to the first return to the section
$$S=\{(x,y) \in \widetilde{\Omega} \text{ such that } x>0, y=0 \}$$
of $\widetilde{\Omega}$.
It is a standard result~\cite[Equation (1.17)]{kuznetsov-98} or~\cite[Equation (4.51)]{meiss-07} that
\begin{equation}\label{eq:rho}
\rho=\exp \left(\int_0^T D(x_{per}(t),y_{per}(t)) \, dt\right),
\end{equation}
where $T$ is the period of the periodic solution $(x_{per},y_{per})(t)$,
gives the derivative of the Poincar\'e map at its stationary point (namely
the point where the periodic orbit intersects $S$). 
Thus,  $D <0$ implies that the derivative is strictly smaller than one ($\rho < 1$), which yields the exponential convergence to the stationary point of the Poincar\'e map, and thus the (exponential) asymptotic stability of the periodic orbit.

For further use (see the proof of Proposition~\ref{prop:ISL})), we
establish uniform bounds on the eigenvalues of  $M_{per}$. We first note
that $M_{per}$ is a symmetric {\em positive} matrix. It  is a simple consequence of the fact that $(x_{per}(t),y_{per}(t)) \in \widetilde{\Omega}$. Indeed, one can check that $\tr(M_{per}(t))=1$ and $$\det( M_{per}(t))=\frac{1}{4}-(x_{per}^2(t)+y_{per}^2(t)) \ge \frac{1}{4} - r_2^2 > 0.$$
This implies that the eigenvalues of $M_{per}(t)$ are $\frac{1}{2} \pm
\sqrt{x_{per}^2(t)+y_{per}^2(t)}$, and thus bounded both from below and
from above. In the sense of symmetric matrices, 
\begin{equation}\label{eq:M_per}
0< \left( \frac{1}{2} - r_2 \right) \Id \le M_{per}(t) \le \left(
  \frac{1}{2} + r_2 \right) \Id.
\end{equation}

\medskip

\noindent{\it Step 3: Convergence in the long time}

\noindent  To conclude our proof, we now show the
convergence~\eqref{eq:exp_CV}. Consider a solution~$(x,y)(t)$
to~\eqref{o1} and the sequence $$(x,y)(t_k)=(x_k,0)$$ of its successive
return points to the section $S$. Otherwise stated,  $x_{k+1}$ is the
image of $x_k$ by the Poincar\'e map. Notice that $x_k$ is a monotonic
sequence (since two trajectories cannot cross). As explained in Step 2, we also know that the
sequence $x_k$ converges exponentially fast to the fixed point $x^*$ of the Poincar\'e map: there exists $C>0$ and $\tilde{\rho} \in (0,1)$ (which can be chosen arbitrarily close to $\rho$) such that, for all $k \ge 0$,
\begin{equation}\label{eq:ykystar}
|x_k - x^*| \le C \tilde{\rho}^k.
\end{equation}
%
%
Since $(x^*,0)$ is on the periodic orbit, there exists a time $t^* \in [0,T)$ such that
$$(x_{per},y_{per})(t^*) =(x^*,0).$$
Without loss of generality, we may assume $t^*=0$. We now remark,
using~\eqref{eq:phidotneg}, that $\varphi:[t_k,t_{k+1}) \mapsto [0,2
\pi)$ is a one-to-one (actually strictly decreasing) function. We may
therefore use $\varphi$ itself  to reparameterize the trajectory between two successive return
points. It follows from~\eqref{o2} that
$$t_{k+1}-t_k= \frac{1 }{\Pe}\,\int_0^{2\pi} \frac{1 }{1 -
  \frac{a}{2r(\varphi^{-1}(\theta))} \cos\theta } d \theta.$$
 Likewise, along the periodic trajectory $(r_{per},\varphi_{per})(t)$ (namely $(x_{per},y_{per})(t)$ in polar coordinate), we have:
$$T=\frac{1 }{\Pe}\, \int_0^{2\pi} \frac{1 }{1 - \frac{a}{2r_{per}(\varphi_{per}^{-1}(\theta))} \cos \theta} d \theta.$$
We now use the fact that $(r,\varphi)(t_k)=(x_k,0)$ is close to
$(r_{per},\varphi_{per})(0)=(x^*,0)$ (by virtue of~\eqref{eq:ykystar})
and the Lipschitz property of the flow associated to~\eqref{o2} as a
function of  the initial conditions over a finite time interval,  to get that
$$|t_{k+1}-t_k - T| \le C \tilde{\rho}^k$$
where here and below $C>0$  denotes  irrelevant constants.

This implies that there exists a time $T_0$ such that for all $k \ge 0$,
$$|t_k - T_0 - k T| \le C {\tilde\rho}^{k}.$$

Indeed, $
t_k-t_0 
= \sum_{l=1}^k (t_l-t_{l-1})= kT+ \sum_{l=1}^k (t_l-t_{l-1} - T)
= kT+ \sum_{l\ge 1} (t_l-t_{l-1} - T) - \sum_{l>k} (t_l-t_{l-1} - T)$. 
Denoting $T_0=t_0+\sum_{l\ge 1} (t_l-t_{l-1} - T)$, we have
$t_k - T_0 - kT =- \sum_{l>k} (t_l-t_{l-1} - T)$, thus our claim.
%

Then, we have: for $k \ge 0$,
\begin{align*}
\|(&x_{per},y_{per}) (k T) - (x,y)(T_0 + k T)\|\\
& \le \|(x_{per},y_{per}) (k T) - (x,y)(t_k)\| + \|(x,y) (t_k) - (x,y)(T_0 + kT)\| \\
& \le |x^* - x_k| + C \left| t_k - ( T_0 + k T ) \right| \\
& \le C {\tilde \rho}^k.
\end{align*}
Using the Lipschitz property of the flow, we thus get: for all $k \ge 1$ and for all $t \in [kT,(k+1)T)$
$$\|(x_{per},y_{per}) (t) - (x,y)(T_0 + t)\| \le C {\tilde \rho}^k.$$
This implies~\eqref{eq:exp_CV} with $\lambda= - \ln ({\tilde \rho})$.
\end{proof}

\subsection{Analysis of  the Fokker-Planck equation~\eqref{Eq:FP}  for
  $M(t)$ given}

In this section, we consider~\eqref{Eq:FP} for a given $M(t)$: \begin{equation}\label{eq:FP_lin}
\begin{aligned}
\frac{\partial \psi}{\partial t} &=
\div \left( \left( - \kappa x + \kappa:M x \right) \psi \right)\\
 &\quad+ 4N
\div \left( \left( - M x + M:M x \right) \psi \right)\\
& \quad + \Delta \psi + 2 \,\div \left( x \psi \right).
\end{aligned}
\end{equation}
This equation can be rewritten in the form
\begin{equation}\label{eq:FPK}
{{\partial \psi}\over{\partial t}} = \div( K(t) x \psi) + \Delta \psi
\end{equation}
with
\begin{equation}\label{eq:K}
K= - \kappa + \kappa:M \Id + 4N \left(  - M + M:M \Id \right) + 2\Id.
\end{equation}
We are thus considering here a {\em linear} Fokker-Planck equation.

We begin with a basic remark, see for example~\cite{dolbeault-kinderlehrer-kowalczyk-02,bartier-dolbeault-illner-kowalczyk-07}.
\begin{prop}
\label{prop:entropy}
Let $\psi_1$ and  $\psi_2$ be two solutions to the Fokker-Planck
equation~\eqref{eq:FPK}. Then
$$\frac{d}{dt} H(\psi_1 | \psi_2) = - I (\psi_1|\psi_2)$$
where $$H(\psi_1 | \psi_2) = \int \ln (\psi_1 / \psi_2) \psi_1$$
denotes the {\em relative entropy} (of $\psi_1$ with respect to $\psi_2$), 
and  $$I(\psi_1 | \psi_2) = \int |\nabla \ln (\psi_1 / \psi_2)|^2
\psi_1$$
is the \emph{Fisher information} (of $\psi_1$ with respect to $\psi_2$).
\end{prop}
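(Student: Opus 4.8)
The plan is to perform the standard entropy dissipation computation for a linear Fokker--Planck equation. First I would recall that any classical solution $\psi$ to~\eqref{eq:FPK} preserves mass, $\int \psi(t,x)\,dx = \int \psi(0,x)\,dx$, which follows by integrating the equation in $x$ and using that both $\div(K(t)x\psi)$ and $\Delta\psi$ are divergences; I would also note that positivity is preserved, so that $\ln(\psi_1/\psi_2)$ makes sense, and that $\psi_2>0$ makes the ratio well-defined. Both of these are part of the "sufficiently regular" standing assumption mentioned just before Proposition~\ref{prop:entropy}, so I would invoke them without belabouring the functional-analytic justification (citing~\cite{arnold-markowich-toscani-unterreiter-01}).

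The core of the proof is the direct differentiation
\[
\frac{d}{dt} H(\psi_1|\psi_2) = \frac{d}{dt}\int \psi_1 \ln\frac{\psi_1}{\psi_2}
= \int \partial_t\psi_1\,\ln\frac{\psi_1}{\psi_2} + \int \partial_t\psi_1 - \int \frac{\psi_1}{\psi_2}\,\partial_t\psi_2,
\]
where the middle term vanishes by mass conservation. Into the first and third terms I would substitute~\eqref{eq:FPK} for $\partial_t\psi_1$ and $\partial_t\psi_2$ respectively, namely $\partial_t\psi_i = \div(K(t)x\,\psi_i + \nabla\psi_i)$, and integrate by parts (the boundary terms at infinity vanish by the decay built into the regularity assumption). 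A convenient bookkeeping device is to write the equation in the divergence-flux form $\partial_t\psi_i = \div\big(\psi_i\,(K(t)x + \nabla\ln\psi_i)\big)$. After the two integrations by parts one is left with
\[
\frac{d}{dt}H(\psi_1|\psi_2) = -\int \psi_1\,(K(t)x + \nabla\ln\psi_1)\cdot\nabla\ln\frac{\psi_1}{\psi_2}
+ \int \psi_1\,(K(t)x+\nabla\ln\psi_2)\cdot\nabla\ln\frac{\psi_1}{\psi_2},
\]
the second line coming from rewriting $\int(\psi_1/\psi_2)\partial_t\psi_2$ and using $\nabla(\psi_1/\psi_2)\cdot(\cdots) = (\psi_1/\psi_2)\nabla\ln(\psi_1/\psi_2)\cdot(\cdots)$ together with $\nabla\ln\psi_2$. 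The drift terms $K(t)x$ cancel exactly between the two lines, and the remaining gradient terms combine to
\[
\frac{d}{dt}H(\psi_1|\psi_2) = -\int \psi_1\,\big(\nabla\ln\psi_1-\nabla\ln\psi_2\big)\cdot\nabla\ln\frac{\psi_1}{\psi_2}
= -\int \psi_1\,\Big|\nabla\ln\frac{\psi_1}{\psi_2}\Big|^2 = -I(\psi_1|\psi_2),
\]
which is the claimed identity.

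The only genuinely delicate point is the justification of differentiating under the integral sign and of discarding the boundary terms in the integrations by parts; this requires enough integrability and decay of $\psi_1$, $\psi_2$ and their logarithmic derivatives, which is precisely why the computation is carried out at the formal level under the blanket regularity hypothesis stated above, with~\cite{arnold-markowich-toscani-unterreiter-01} invoked for a rigorous functional framework. I would remark in passing that the key structural feature making the identity clean is that the drift $K(t)x$ is the \emph{same} for $\psi_1$ and $\psi_2$, so it cancels and no cross term involving $K$ survives; this is what allows the right-hand side to collapse to the manifestly nonpositive Fisher information $-I(\psi_1|\psi_2)$.
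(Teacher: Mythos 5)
Your proof is correct and follows the same entropy dissipation computation as the paper: split $\tfrac{d}{dt}H$ into three terms, kill the middle one by mass conservation, integrate by parts, and observe that the common drift $K(t)x$ cancels, leaving $-I(\psi_1|\psi_2)$. The only cosmetic difference is that you write the flux as $\psi_i(K(t)x+\nabla\ln\psi_i)$ while the paper writes $b\psi_i+\nabla\psi_i$ with $b=K(t)x$, which are identical.
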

\begin{proof}
Set $b(t,x)= K(t)x$. We argue formally. Our manipulations are standard
and can be
made rigorous using appropriate functional spaces and cut-off functions. We write
\begin{align*}
\frac{d}{dt} H(\psi_1 | \psi_2)
&= \int \partial_t \psi_1  - \int \partial_t \psi_2 \frac{\psi_1}{\psi_2} + \int \ln\left(\frac{\psi_1}{\psi_2} \right) \partial_t \psi_1 \\
&= 0 - \int \div( b \psi_2 + \nabla \psi_2) \frac{\psi_1}{\psi_2}  + \int \div( b \psi_1 + \nabla \psi_1) \ln\left(\frac{\psi_1}{\psi_2} \right)\\
&= \int ( b \psi_2 + \nabla \psi_2 ) \cdot \nabla \frac{\psi_1}{\psi_2}  - \int ( b \psi_1 + \nabla \psi_1 ) \cdot \nabla \ln\left(\frac{\psi_1}{\psi_2} \right)\\
&= \int \nabla \psi_2  \cdot \nabla \frac{\psi_1}{\psi_2}  - \int  \nabla \psi_1  \cdot \nabla \ln\left(\frac{\psi_1}{\psi_2} \right)\\
&= \int\left( \frac{\psi_1}{\psi_2} \nabla \psi_2    -   \nabla \psi_1 \right) \cdot \nabla \ln\left(\frac{\psi_1}{\psi_2} \right)\\
&= - I(\psi_1 | \psi_2).
\end{align*}
Notice that this proof does not require $M$ to satisfy~\eqref{Eq:M}.
\end{proof}

We now build an explicit Gaussian solution to~\eqref{eq:FP_lin}.
\begin{prop}\label{prop:gauss}
Let $M(t)$ be a given time-dependent symmetric definite positive matrix with $\tr(M(0))=1$. Introduce the associated two-dimensional Gaussian probability density function
$$\psi_M(t,x)=\f{\sqrt{\det M^{-1}(t)}}{2\pi}\exp\left(-\f{x^TM^{-1}(t)\,x}2\right).$$
Then, $\psi_M$ satisfies~\eqref{eq:FP_lin} (for the given function $M(t)$) if and only if $M(t)$ satisfies~\eqref{Eq:M}.
\end{prop}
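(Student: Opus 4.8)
The plan is to substitute the Gaussian ansatz $\psi_M$ directly into the Fokker--Planck equation~\eqref{eq:FP_lin}, rewritten in its compact form~\eqref{eq:FPK}, namely $\partial_t\psi=\div(K(t)x\,\psi)+\Delta\psi$ with $K(t)$ given by~\eqref{eq:K}, and to check that the resulting pointwise identity holds for all $x\in\R^2$ if and only if $M(t)$ solves~\eqref{Eq:M}. Since $\psi_M$ is a centered Gaussian, both sides of the equation will be $\psi_M$ multiplied by a polynomial in $x$ of degree at most two that is even in $x$ (hence has no linear term), so the identity reduces to the coincidence of a quadratic form and of a scalar.

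First I would compute the left-hand side: up to an irrelevant additive constant $\ln\psi_M = -\frac12\ln\det M(t) - \frac12\,x^{T}M^{-1}(t)x$, and using $\partial_t M^{-1}=-M^{-1}\dot M\,M^{-1}$ together with $\partial_t\ln\det M=\tr(M^{-1}\dot M)$, I get $\partial_t\psi_M=\left(-\frac12\tr(M^{-1}\dot M)+\frac12\,x^{T}M^{-1}\dot M\,M^{-1}x\right)\psi_M$. For the right-hand side, using $\nabla\psi_M=-M^{-1}x\,\psi_M$, I get $\Delta\psi_M=\left(-\tr(M^{-1})+x^{T}M^{-2}x\right)\psi_M$ and $\div(K x\,\psi_M)=\left(\tr K-x^{T}K^{T}M^{-1}x\right)\psi_M$. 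Matching the quadratic forms, after the symmetrization $x^{T}K^{T}M^{-1}x=\frac12 x^{T}(K^{T}M^{-1}+M^{-1}K)x$, and then conjugating the resulting matrix identity by $M$, yields $\dot M=-KM-MK^{T}+2\Id$; matching the $x$-independent terms gives $-\frac12\tr(M^{-1}\dot M)=\tr K-\tr(M^{-1})$, which is automatically implied by the matrix identity (apply $\tr(M^{-1}\,\cdot\,)$ to it and use $\tr K=\tr K^{T}$). Hence $\psi_M$ solves~\eqref{eq:FPK} if and only if $\dot M=-KM-MK^{T}+2\Id$.

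It then remains to insert the expression~\eqref{eq:K} of $K$ and expand, using that $\Id$, $M$, $(\kappa:M)\Id$ and $(M:M)\Id$ are symmetric, so as to recognize that $-KM-MK^{T}+2\Id$ equals $\kappa M+M\kappa^{T}-2(\kappa:M)M+8N\left(M^2-(M:M)M\right)+4(\Id/2-M)$, which is precisely the right-hand side of~\eqref{Eq:M}; this establishes the claimed equivalence (the normalization $\tr(M(0))=1$ is not needed here, beyond ensuring consistency with~\eqref{Eq:M}, along which $\tr M\equiv1$ by Proposition~\ref{prop:sym/trace}). I do not expect a genuine obstacle: this is the classical fact that linear, Ornstein--Uhlenbeck-type Fokker--Planck dynamics preserve Gaussianity, with the covariance matrix evolving through a Lyapunov-type differential equation. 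The only delicate point is the bookkeeping --- symmetrizing $x^{T}K^{T}M^{-1}x$ correctly, and carrying out the final matrix-algebra identification of $-KM-MK^{T}+2\Id$ with the right-hand side of~\eqref{Eq:M}.
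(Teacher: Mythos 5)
Your proposal is correct and follows essentially the same route as the paper: plug the Gaussian ansatz into the linear equation $\partial_t\psi=\div(Kx\,\psi)+\Delta\psi$, match the quadratic forms (after symmetrization) and the scalar terms, note that the scalar identity follows automatically from the matrix one by taking traces, and arrive at $\dot M=-KM-MK^{T}+2\Id$, which is a rewriting of~\eqref{Eq:M}. The only cosmetic difference is that the paper works with $P=M^{-1}$ throughout and conjugates at the end, whereas you carry $M$ directly and conjugate the quadratic-form identity by $M$; the content is identical.
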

\begin{proof}
Let us denote $P(t)=M^{-1}(t)$, so that
\begin{equation*}
\psi_M(t,x)=\f{\sqrt{\det P(t)}}{2\pi}\exp\left(-\f{x^TP(t)x}2\right).
\end{equation*}
It is easy to calculate that
\begin{eqnarray*}  \partial_t \psi_M=
  -\f{x^T \partial_tPx}2\psi_M+\f{\partial_t \left(\sqrt{\det
      P}\right)}{2\pi}\exp\left(-\f{x^TPx}2\right),\end{eqnarray*}
where here and for the rest of this
proof we use the short-hand notation $\partial_t$ instead of
${{\partial }\over{\partial t}}$. 
Note that (using the symmetry of $P$),
\begin{align*}
\partial_t \left(\sqrt{\det P}\right)
&=\f12 \left(\sqrt{\det P}\right)^{-1}\partial_t(\det P)\\
&=\f12\sqrt{\det P}\,\partial_t(\ln \det P)=\f12\sqrt{\det P}\,\tr(P^{-1}\partial_tP).
\end{align*}
It follows that
\begin{equation*}
\partial_t \psi_M=\left( -\f{x^T(\partial_tP)x}2+\f12\tr(P^{-1}\partial_tP) \right)\psi_{M}. 
\end{equation*}
Plugging $\psi_M$ in the right hand side of the Fokker-Planck equation~\eqref{Eq:FP} yields 
\begin{eqnarray*} \div (Kx\psi_M+\nabla \psi_M)=\tr (K-P)\psi_M-x^TP(K-P)x\psi_M, \end{eqnarray*}
where $K$ is defined by~\eqref{eq:K}, so that $\psi_M$ is solution to~\eqref{Eq:FP} if and only if
\begin{equation*}
  -\f{x^T(\partial_tP)x}2+\f12\tr(P^{-1}\partial_tP)=\tr
  (K-P) -x^TP(K-P)x,  
\end{equation*}
for all $x\in\R^2$. This is equivalent to the couple of conditions
\begin{equation}
\label{eq:couple} 
\left\{
\begin{aligned}
\f12\tr(P^{-1}\partial_tP)&=\tr (K-P), \\
    \partial_t P&=PK+K^TP-2P^2,
  \end{aligned}
\right.
\end{equation}
where, for the second line, we have equated the \emph{symmetric} part of
the two second order coefficients. We immediately remark that the second
line of~\eqref{eq:couple} 
implies the first line, by elementary properties of the trace.

We now write \eqref{Eq:M}  under the form 
\begin{equation*}
\partial_t M=-(KM+MK^T)+2\Id,
\end{equation*}
where, again, $K$ is defined by~\eqref{eq:K}. Thus $M$ satisfies~\eqref{Eq:M} if and only if $P=M^{-1}$ satisfies
\begin{equation}\label{eq:couple2} 
\partial_t P= PK+K^TP-2P^2,
\end{equation}
where we use the fact $\partial_t P=-P \partial_t M P$.

By comparing~\eqref{eq:couple} and~\eqref{eq:couple2}, we thus get the result: $\psi_M$ satisfies~\eqref{Eq:FP} if and only if $M(t)$ satisfies~\eqref{Eq:M}.
\end{proof}

\begin{remark}
Notice that this proposition implies the existence of Gaussian solutions to the {\em non-linear} Fokker-Planck equation~\eqref{Eq:FP}. Indeed, let us consider a Gaussian initial condition $\psi(0,\cdot)$ and the associated $\displaystyle M(0)=\int x \otimes x \, \psi(0,x) \, dx$ initial condition to~\eqref{Eq:M}. Let $M(t)$ be the solution to~\eqref{Eq:M} with this initial condition. Let us then consider $\psi_M$ the Gaussian solution to~\eqref{eq:FP_lin} built in the previous proposition. We notice that $\psi_M$ is then a solution to the non-linear Fokker-Planck equation~\eqref{Eq:FP}, by uniqueness of the solution to~\eqref{Eq:M}.
\end{remark}
 
We now proceed with a uniqueness result for the periodic
solution to~\eqref{Eq:FP}. This result, for which we provide here a self
contained proof,  is also a consequence of the
convergence stated in Proposition~\ref{prop:CV} and proved in the next section.
\begin{prop}\label{prop:uniq}
Consider $\psi_{per}(t,x)$ a (sufficiently regular) periodic solution to the non-linear Fokker-Planck equation~\eqref{Eq:FP}. Define the associated periodic time-dependent matrix $$M_{per}(t)=M[\psi_{per}(t,\cdot)]=\int_{\R^2} x \otimes x \, \psi_{per}(t,x) \, dx.$$
Assume that $M_{per}(0) \in \Omega$, 
where $\Omega$ has been introduced in Proposition~\ref{prop:exist-periodic}. Then, $M_{per}$ is the unique periodic solution with value in $\Omega$ built in Proposition~\ref{prop:exist-periodic} and 
$$\psi_{per}=\psi_{M_{per}}$$
is the associated Gaussian solution built in Proposition~\ref{prop:gauss}.
\end{prop}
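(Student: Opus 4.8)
The plan is to reduce the statement entirely to facts already established about the finite-dimensional dynamical system~\eqref{Eq:M}. The key observation is that whenever $\psi$ is a (regular) solution to the nonlinear equation~\eqref{Eq:FP}, the associated moment $M[\psi(t,\cdot)]$ solves the closed ODE~\eqref{Eq:M}; this is recorded right after the statement of~\eqref{Eq:FP}. Hence, starting from the given periodic solution $\psi_{per}$, the matrix $M_{per}(t) = M[\psi_{per}(t,\cdot)]$ is a periodic solution to~\eqref{Eq:M}. Since by assumption $M_{per}(0) \in \Omega$ and $\Omega$ is forward-invariant under the flow of~\eqref{Eq:M} (this is part of the content of Proposition~\ref{prop:exist-periodic}, via the invariant annulus $\widetilde\Omega$), $M_{per}(t)$ stays in $\Omega$ for all $t \ge 0$; by periodicity it is in fact globally a $\Omega$-valued periodic orbit. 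But Proposition~\ref{prop:exist-periodic} asserts that there is a \emph{unique} periodic solution to~\eqref{Eq:M} with values in $\Omega$ — it is the stable orbit $M_{per}$ built there (the uniqueness following from the negative-divergence/Dulac argument in Step~2 of that proof). Therefore $M_{per}(t)$ must coincide with that orbit. This gives the first assertion.

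Next I would identify $\psi_{per}$ with the Gaussian $\psi_{M_{per}}$ of Proposition~\ref{prop:gauss}. Since $M_{per}(t)$ is a symmetric positive-definite matrix of trace one for all $t$ — positivity coming from the bounds~\eqref{eq:M_per} established in Step~2 of the proof of Proposition~\ref{prop:exist-periodic} — Proposition~\ref{prop:gauss} produces a Gaussian solution $\psi_{M_{per}}$ to the \emph{linear} equation~\eqref{eq:FP_lin} with this particular $M(t)$; and because this $M(t)$ solves~\eqref{Eq:M}, the Remark following Proposition~\ref{prop:gauss} tells us $\psi_{M_{per}}$ is in fact a solution to the \emph{nonlinear} equation~\eqref{Eq:FP}, with $M[\psi_{M_{per}}(t,\cdot)] = M_{per}(t)$. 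So now $\psi_{per}$ and $\psi_{M_{per}}$ are two solutions of the \emph{same} linear Fokker--Planck equation~\eqref{eq:FP_lin} (both feel the identical coefficient $K(t)$ built from $M_{per}(t)$), and I want to conclude they are equal.

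For that last identification I would invoke Proposition~\ref{prop:entropy}: applied to $\psi_1 = \psi_{per}$ and $\psi_2 = \psi_{M_{per}}$ (both solutions of~\eqref{eq:FP_lin}), it gives $\frac{d}{dt} H(\psi_{per} \,|\, \psi_{M_{per}}) = - I(\psi_{per}\,|\,\psi_{M_{per}}) \le 0$, so $t \mapsto H(\psi_{per}(t,\cdot)\,|\,\psi_{M_{per}}(t,\cdot))$ is nonincreasing; but it is also periodic in $t$ (both $\psi_{per}$ and $\psi_{M_{per}}$ are periodic with the common period $T$ of $M_{per}$), hence constant, which forces $I(\psi_{per}\,|\,\psi_{M_{per}}) \equiv 0$. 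Vanishing Fisher information means $\nabla \ln(\psi_{per}/\psi_{M_{per}}) = 0$, i.e. $\psi_{per}/\psi_{M_{per}}$ is a function of $t$ only; since both are probability densities in $x$ this ratio is $1$, so $\psi_{per} = \psi_{M_{per}}$.

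The main obstacle is a regularity/integrability issue rather than a conceptual one: to run the entropy argument of Proposition~\ref{prop:entropy} one needs $H(\psi_{per}\,|\,\psi_{M_{per}})$ to be finite and the formal integrations by parts to be justified, and to close the final step one needs enough decay to conclude from $I \equiv 0$ that the densities agree. I would handle this by working, as announced at the start of Section~\ref{sec:convergence}, within a functional setting (for instance that of~\cite{arnold-markowich-toscani-unterreiter-01}) in which $\psi_{per}$ has Gaussian-type tails comparable to those of $\psi_{M_{per}}$, so that the relative entropy is finite and the standard cut-off justification applies; the positivity and boundedness of $M_{per}(t)$ from~\eqref{eq:M_per} are exactly what make the reference Gaussian $\psi_{M_{per}}$ uniformly nondegenerate in time, which is what such a comparison requires.
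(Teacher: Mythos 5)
Your overall structure agrees with the paper's: identify $M_{per}$ with the unique periodic orbit from Proposition~\ref{prop:exist-periodic}, observe that $\psi_{per}$ and $\psi_{M_{per}}$ are then two solutions of the \emph{same} linear equation~\eqref{eq:FP_lin}, and run the entropy-dissipation argument of Proposition~\ref{prop:entropy} to conclude $I(\psi_{per}|\psi_{M_{per}}) \equiv 0$. The first two parts are fine and essentially verbatim what the paper does.

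The gap is in the step where you write that ``both $\psi_{per}$ and $\psi_{M_{per}}$ are periodic with the common period $T$ of $M_{per}$''. This is asserted, not proved. A priori the period $\tilde T$ of $\psi_{per}$ need not equal $T$; the density could oscillate on a longer time scale than its second moments. The paper explicitly flags this and devotes a substantial part of the proof to it: it introduces the case $\tilde T \neq T$ and runs a continued-fraction approximation to produce times $\tau_n = p_n T = q_n \tilde T + \varepsilon_n$ with $\varepsilon_n \to 0$, shows $|H(\tau_n) - H(0)| \to 0$ by continuity of $M_{per}$ and finiteness of $\int \psi_{per}(0,x)\|x\|^2\,dx$, and only then integrates the entropy identity over $[0,\tau_n]$ to force $\int_0^{\max(T,\tilde T)} I \, dt = 0$. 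Your proof simply does not address this possibility. That said, the gap can be closed more cheaply than the paper does: since $M_{per} = M[\psi_{per}]$ inherits $\tilde T$ as a period, and $T$ is the fundamental period of the non-constant orbit $M_{per}$, necessarily $\tilde T$ is a positive integer multiple of $T$; then $\psi_{M_{per}}$ (being $T$-periodic) is also $\tilde T$-periodic, $H(\psi_{per}|\psi_{M_{per}})$ is $\tilde T$-periodic and nonincreasing, hence constant, and the argument closes. You should make this observation explicit rather than assume the periods coincide.
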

\begin{proof}
The uniqueness of the periodic orbit in $\Omega$ ensures that $M_{per}$ is indeed the periodic solution built in Proposition~\ref{prop:exist-periodic} (up to a phase change we may ignore).

Then, by uniqueness of solutions to~\eqref{Eq:M}, we notice that
$\psi_{per}$ and $\psi_{M_{per}}$ are both solutions to the same {\em
  linear} Fokker-Planck equation~\eqref{eq:FP_lin}, with
$M(t)=M_{per}(t)$. The question is thus now the following: given the
function $M_{per}$, we have to show that the periodic
solution to \eqref{eq:FP_lin} is unique (up to a normalization factor of course) and equal to~$\psi_{M_{per}}$.

Let us denote $\psi_{per}$ a periodic solution. We want to show that $\psi_{per}=\psi_{M_{per}}$. Arguing as in Proposition~\ref{prop:entropy}, we have
\begin{equation}\label{eq:ent_per} 
\f{d}{dt}
H(\psi_{per}| \psi_{M_{per}})+I(\psi_{per}| \psi_{M_{per}})=0.
\end{equation}
If $\psi_{per}$ and $\psi_{M_{per}}$ share the same period, say $T$,
then the function $H(\psi_{per}|\psi_{M_{per}})$ is also $T$-periodic
in time, and thus integrating the above equation from 0 to $T$, we
obtain 
\begin{eqnarray*} \int_0^T
  I(\psi_{per}|\psi_{M_{per}})dt=0. \end{eqnarray*}
Since the function $I$ is nonnegative, this immediately implies that
$\psi_{per}=\psi_{M_{per}}$ on $[0,T]$ and thus for any time.

If the period $\tilde T$ of  $\psi_{per}$  is different from the period
$T$ of $\psi_{M_{per}}$, we slightly adapt the above argument. From standard results on continuous fractions (see for example~\cite{hardy-wright-79,niven-zuckerman-montgomery-91}), there exists sequences of integers $p_n$, $q_n$ such that
$$\left|q_n \frac{\tilde{T}}{T} - p_n\right| < \frac{1}{q_{n+1}} \le \frac{2}{\phi^{n+1}}$$
where $\phi=\frac{1+\sqrt{5}}{2}$. Thus, if we set $\tau_n=p_n T$, we have $\tau_n = q_n \tilde{T} + \varepsilon_n$ where $\lim_{n \to \infty}\varepsilon_n = 0$ and $\lim_{n \to \infty}\tau_n = \infty$. Then, we have, for $n$ sufficiently large, 
\begin{align*}
&\left|H(\psi_{per}|\psi_{M_{per}})\, (\tau_n) -H(\psi_{per}|\psi_{M_{per}})
\,(0)\right|\\
& \le
\left|H(\psi_{per}|\psi_{M_{per}})\, (\tau_n)  -
H(\psi_{per}(p_n T)|\psi_{M_{per}}(q_n \tilde{T}))  \right|\\
& \quad +
\left|H(\psi_{per}(p_n T)|\psi_{M_{per}}(q_n \tilde{T}))-H(\psi_{per}|\psi_{M_{per}})
\,(0)\right|\\
& =\left|H(\psi_{per}(p_n T) |\psi_{M_{per}}(q_n \tilde{T} + \varepsilon_n) )\,   -
H(\psi_{per}(p_n T)|\psi_{M_{per}}(q_n \tilde{T}))  \right|\\
&= \left|  \int \psi_{per}(0, \cdot) \ln \left( \frac{\psi_{M_{per}}(0, \cdot)}{\psi_{M_{per}}(\varepsilon_n, \cdot)} \right) \right|
\end{align*}
and the right-hand side converges to zero as $n$ goes to infinity, using the continuity in time of $M_{per}$, and the fact that $\int \psi_{per}(0,x) \|x\|^2 \, dx < \infty$.
%
%

Integrating~\eqref{eq:ent_per} in time from $0$ to $\tau_n$, we thus get
$$\lim_{n \to \infty} \int_0^{{\tau}_n}
  I(\psi_{per}|\psi_{M_{per}})\,dt=0,$$ and thus $$ \int_0^{\max(T,\tilde T)}
  I(\psi_{per}|\psi_{M_{per}})\,dt=0$$  since $I$ is
  nonnegative and ${\tau_n}\geq \max(T,\tilde T)$ for large enough $n$. This shows that
  $\psi_{per}=\psi_{M_{per}}$ on the
  time interval $(0,\max(T,\tilde T))$, and therefore for all times.
\end{proof}

We conclude this section with an inequality that will be useful below to show exponential convergence to periodic solutions for~\eqref{Eq:FP}.
\begin{prop}\label{prop:ISL}
Let $M_{per}$ be the periodic solution to~\eqref{Eq:M}, and $\psi_{per}=\psi_{M_{per}}$ the associated (Gaussian) periodic solution to~\eqref{Eq:FP}. Then, $\psi_{per}$ satisfies a uniform in time logarithmic Sobolev inequality in the following sense: there exists $\mu >0$ such that for any probability density function $\psi$ and for any time $t \in [0,T)$,
\begin{equation}\label{eq:mu} 
H(\psi|\psi_{per}(t,\cdot)) \le \frac{1}{2\mu} I(\psi|\psi_{per}(t,\cdot)).
\end{equation}
\end{prop}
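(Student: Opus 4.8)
The plan is to exploit the explicit Gaussian form of $\psi_{per}=\psi_{M_{per}}$ together with the uniform bounds~\eqref{eq:M_per} on the eigenvalues of $M_{per}(t)$, and then invoke the classical Bakry-\'Emery criterion (or the Holley-Stroock perturbation lemma combined with the Gaussian log-Sobolev inequality). Recall that $\psi_{per}(t,x)$ is, at each fixed time $t$, the centered Gaussian density with covariance $M_{per}(t)$, that is $\psi_{per}(t,\cdot)=\psi_{M_{per}}(t,\cdot)$ with $-\ln\psi_{M_{per}}(t,x)=\tfrac12 x^T M_{per}^{-1}(t)x + \tfrac12\ln\det M_{per}(t) + \ln(2\pi)$. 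Its Hessian in $x$ is exactly $M_{per}^{-1}(t)$, which by~\eqref{eq:M_per} satisfies, in the sense of symmetric matrices,
\begin{equation*}
\nabla^2\bigl(-\ln\psi_{per}(t,\cdot)\bigr) = M_{per}^{-1}(t) \ge \left(\frac{1}{2}+r_2\right)^{-1}\Id =: \mu\,\Id > 0,
\end{equation*}
uniformly in $t\in[0,T)$ (indeed for all $t$, by periodicity).

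First I would record this lower bound on the Hessian of the potential. Second, I would apply the Bakry-\'Emery theorem: for a probability measure $e^{-V(x)}\,dx$ on $\R^d$ with $\nabla^2 V \ge \mu\,\Id$ for some $\mu>0$, the log-Sobolev inequality $H(\psi\,|\,e^{-V}) \le \tfrac{1}{2\mu} I(\psi\,|\,e^{-V})$ holds for every probability density $\psi$; this is precisely the statement~\eqref{eq:mu}. Applying this at each fixed time $t$ with $V=V(t,\cdot)=-\ln\psi_{per}(t,\cdot)$ and the \emph{same} constant $\mu=\left(\tfrac12+r_2\right)^{-1}$ (which does not depend on $t$) yields the uniform-in-time inequality. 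Since the constant is independent of $t$ and $\psi_{per}$ is $T$-periodic, restricting to $t\in[0,T)$ loses nothing. A fully self-contained alternative, if one prefers not to cite Bakry-\'Emery, is the Holley-Stroock route: write $\psi_{per}(t,\cdot)$ as a bounded perturbation of the standard Gaussian $\gamma$ on $\R^2$ (the Radon-Nikodym density $\psi_{per}/\gamma$ and its inverse are bounded uniformly in $t$ thanks to the two-sided bound~\eqref{eq:M_per}, which controls both $M_{per}^{-1}-\Id$ and $\det M_{per}$), and then deduce the LSI for $\psi_{per}$ from the LSI for $\gamma$ with a constant depending only on those uniform bounds.

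The only point requiring any care — and it is the mildest of obstacles — is to make sure the constant $\mu$ is genuinely uniform, i.e.\ that it depends only on $r_2$ (hence on the fixed parameters $N$, $a$, $\Pe$ and the chosen $\epsilon_1,\epsilon_2$) and not on the particular time $t$ nor on a choice of phase for the periodic orbit. This is immediate from~\eqref{eq:M_per}, which was established precisely for this purpose in Step~2 of the proof of Proposition~\ref{prop:exist-periodic}: it gives $M_{per}^{-1}(t)\ge\left(\tfrac12+r_2\right)^{-1}\Id$ for all $t$. Hence~\eqref{eq:mu} holds with $\mu=\left(\tfrac12+r_2\right)^{-1}>0$, which completes the proof.
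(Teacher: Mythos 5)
Your proof is correct and follows essentially the same route as the paper: both rest on the fact that a centered Gaussian with covariance $M$ satisfies a log-Sobolev inequality with constant $\mu = 1/\lambda_{\max}(M)$ (which is precisely the Bakry-\'Emery criterion applied to the Gaussian potential, since $\nabla^2(-\ln\psi_{M}) = M^{-1}$), combined with the uniform-in-time bound~\eqref{eq:M_per} from Step~2 of Proposition~\ref{prop:exist-periodic}. The paper simply cites this Gaussian LSI as a known fact (referring to~\cite{ABC-00}) rather than naming Bakry-\'Emery explicitly, so there is no substantive difference.
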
 
\begin{proof}
It is well known that centered Gaussian distributions with covariance
matrix $M$ satisfy a logarithmic Sobolev inequality with parameter the
inverse of the largest eigenvalue of $M$, see for example~\cite{ABC-00}.

Now, \eqref{eq:M_per} precisely shows that the eigenvalues of $M$
are  uniformly  bounded from above by a time-independent constant.
This concludes the proof.
\end{proof}

\subsection{Long time convergence of the solution to the non-linear Fokker-Planck equation~\eqref{Eq:FP} to a periodic solution}
\label{ssec:convergence}
Our final step is to 
address the long time behaviour of the solution to the non-linear Fokker-Planck
equation~\eqref{Eq:FP}. Assume the parameters $(a,N,\Pe)$ are such that
the conclusion of Proposition~\ref{prop:exist-periodic} holds:
$M(t)=M[\psi(t,\cdot)]$ converges exponentially fast to the periodic
solution $M_{per}(t)$ (with value in $\Omega$) of~\eqref{eq:M}. Let us
denote $$\psi_{per}=\psi_{M_{per}}$$ the associated unique periodic
solution to~\eqref{Eq:FP} (see Proposition~\ref{prop:uniq} above).
Then, consider a solution $\psi$ of~\eqref{Eq:FP} and assume that the initial condition $\psi(0)$ satisfies
\begin{equation}\label{eq:hyp}
\int_{\R^2} x \otimes x \, \psi(0,x) \, dx = M(0) \in \Omega,
\end{equation}
where $\Omega$ is defined above. We have:
\begin{prop}\label{prop:CV}
Under the assumptions of Proposition~\ref{prop:exist-periodic} (and in particular~\eqref{eq:hyp}), the solution $\psi$ to~\eqref{Eq:FP} (which we assume sufficiently regular) converges exponentially fast to $\psi_{per}$ in the following sense: there exist $C>0$ and $\nu > 0$ such that, for all time $t > 0$,
$$H(\psi(t,\cdot) | \psi_{per}(t, \cdot) ) \le C \exp (-\nu t).$$
\end{prop}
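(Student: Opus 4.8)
The plan is to combine the entropy dissipation identity of Proposition~\ref{prop:entropy}, the uniform logarithmic Sobolev inequality of Proposition~\ref{prop:ISL}, and the exponential convergence $M(t) \to M_{per}(t)$ from Proposition~\ref{prop:exist-periodic}. The central object is the relative entropy $h(t) = H(\psi(t,\cdot)\,|\,\psi_{per}(t,\cdot))$. The subtlety, compared to the elementary case where one compares two solutions of the \emph{same} linear Fokker--Planck equation, is that $\psi$ solves the nonlinear equation~\eqref{Eq:FP}, equivalently the linear equation~\eqref{eq:FPK} with coefficient $K(t)$ built from $M(t)=M[\psi(t,\cdot)]$, whereas $\psi_{per}$ solves the linear equation with a \emph{different} coefficient $K_{per}(t)$ built from $M_{per}(t)$. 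So $\psi_{per}$ is not an exact solution of the equation that $\psi$ solves, and differentiating $h$ will produce, besides the good Fisher-information dissipation term, an extra term measuring the discrepancy $K(t)-K_{per}(t)$, which is controlled by $\|M(t)-M_{per}(t)\| \le C e^{-\lambda t}$.

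Concretely, I would first redo the computation of Proposition~\ref{prop:entropy} but with $\psi_1=\psi$ a solution of~\eqref{eq:FPK} with drift $b(t,x)=K(t)x$ and $\psi_2=\psi_{per}=\psi_{M_{per}}$ a solution of the linear equation with drift $b_{per}(t,x)=K_{per}(t)x$. Writing $K(t)=K_{per}(t)+\delta K(t)$ with $\|\delta K(t)\| \le C\|M(t)-M_{per}(t)\| \le C e^{-\lambda t}$, the differentiation yields
\begin{equation*}
\frac{d}{dt} H(\psi\,|\,\psi_{per}) = - I(\psi\,|\,\psi_{per}) + \int (\delta K(t)\, x)\cdot \nabla\ln\!\left(\frac{\psi}{\psi_{per}}\right)\psi\, dx.
\end{equation*}
The remainder term is estimated by Cauchy--Schwarz as
$\big(\int \|\delta K(t)x\|^2 \psi\big)^{1/2} I(\psi\,|\,\psi_{per})^{1/2} \le \eta\, I(\psi\,|\,\psi_{per}) + \frac{1}{4\eta}\|\delta K(t)\|^2 \int \|x\|^2\psi$ for any $\eta>0$. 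Here one uses that $\int\|x\|^2\psi(t,x)\,dx = \tr M(t)$ is bounded (in fact it equals $1$ if we assume $\tr M(0)=1$, or more generally it stays bounded because $M(t)\to M_{per}(t)$), so the last factor is $\le C$. Choosing $\eta = 1/4$ absorbs half of $I$, leaving
\begin{equation*}
\frac{d}{dt} h(t) \le - \tfrac{1}{2} I(\psi\,|\,\psi_{per})(t) + C\, \|\delta K(t)\|^2 \le -\mu\, h(t) + C e^{-2\lambda t},
\end{equation*}
where in the last step I invoke the uniform logarithmic Sobolev inequality~\eqref{eq:mu} of Proposition~\ref{prop:ISL}, i.e.\ $I(\psi\,|\,\psi_{per}(t,\cdot)) \ge 2\mu\, H(\psi\,|\,\psi_{per}(t,\cdot))$. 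A Gr\"onwall argument then gives $h(t) \le e^{-\mu t} h(0) + C \int_0^t e^{-\mu(t-s)} e^{-2\lambda s}\, ds \le C' e^{-\nu t}$ with $\nu = \min(\mu, 2\lambda)$ (with an extra factor $t$ if $\mu=2\lambda$, which only changes the rate infinitesimally), provided $h(0)=H(\psi(0)\,|\,\psi_{per}(0))<\infty$, which holds for a sufficiently regular initial datum.

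The step I expect to be the main obstacle is making the differentiation of $h(t)$ rigorous and, relatedly, justifying that the cross term is exactly $\int(\delta K x)\cdot\nabla\ln(\psi/\psi_{per})\,\psi$ with no leftover boundary contributions at infinity --- this requires the usual truncation/cut-off arguments and enough decay and regularity of $\psi$ and $\psi_{per}$ (the latter is Gaussian, so harmless; the former is assumed regular as stated in the paper). One must also be slightly careful that $h(0)<\infty$: this follows from assuming $\psi(0,\cdot)$ has finite relative entropy with respect to a Gaussian, e.g.\ finite second moment and finite entropy, which is part of the standing regularity hypothesis. A minor point is that the bound $\int\|x\|^2\psi(t,x)\,dx\le C$ uniformly in $t$ should be recorded: since $M[\psi(t,\cdot)]$ solves~\eqref{Eq:M} and converges to the bounded periodic orbit $M_{per}$, its trace is bounded, giving the claim. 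Everything else is the routine Gr\"onwall estimate sketched above.
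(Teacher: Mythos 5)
Your argument reproduces the paper's proof essentially line for line: view $\psi$ and $\psi_{per}$ as solving the two linear Fokker--Planck equations with drifts $K(t)x$ and $K_{per}(t)x$, differentiate $H(\psi\,|\,\psi_{per})$ to get $-I$ plus a cross term in $K-K_{per}$, estimate that term via Young's inequality and the bound $\int\|x\|^2\psi=\tr M(t)$, then close with the uniform log-Sobolev inequality and Gr\"onwall. The only discrepancy is a harmless sign slip (the cross term should read $-\int[K-K_{per}]x\cdot\nabla\ln(\psi/\psi_{per})\,\psi$), which is immaterial since it is immediately absorbed by Cauchy--Schwarz.
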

\begin{proof}
To the function $\psi$ is associated $M(t)=M[\psi(t,\cdot)]$ satisfying~\eqref{eq:M}, which (by  Proposition~\ref{prop:exist-periodic}) converges exponentially fast to $M_{per}$: $\forall t \ge  0$
\begin{equation}\label{eq:1}
\|M(t) - M_{per}(t) \| \le C \exp(-\lambda t).
\end{equation}
The function $\psi$ satisfies the linear Fokker-Planck equation:
\begin{equation}\label{eq:2}
\frac{\partial \psi}{\partial t}= \div (K(t)x \psi) + \Delta \psi,
\end{equation}
where $K$ is defined by~\eqref{eq:K}. Likewise, the function $\psi_{per}$ satisfies the linear Fokker-Planck equation:
$$\frac{\partial \psi}{\partial t}= \div (K_{per}(t)x \psi) + \Delta \psi,$$
where $K_{per}$ is the periodic function defined by~\eqref{eq:K} with $M=M_{per}$. Notice that from~\eqref{eq:1}, we get
\begin{equation}\label{eq:3}
\|K(t) - K_{per}(t) \| \le C \exp(-\lambda t).
\end{equation}

Now, adapting the proof of Proposition~\ref{prop:entropy} and rewriting~\eqref{eq:2} as:
$$\frac{\partial \psi}{\partial t}= \div (K_{per}(t)x \,\psi) + \Delta \psi + \div ([K(t)-K_{per}(t)]x \,\psi),$$
we have, for $0 < \varepsilon < 1$,
\begin{align*}
\frac{d}{dt} H(\psi|\psi_{per}) 
&= - I(\psi|\psi_{per}) + \int \div ([K(t)-K_{per}(t)]x \psi) \ln \left(\frac{\psi}{\psi_{per}} \right)\\
&=  - I(\psi|\psi_{per}) - \int [K(t)-K_{per}(t)]x \cdot \nabla \ln \left(\frac{\psi}{\psi_{per}}  \right) \psi \\
& \le - (1-\varepsilon) I(\psi|\psi_{per}) + \frac{1}{4 \varepsilon} \int \|[K(t)-K_{per}(t)]x\|^2 \psi \\
& \le - (1-\varepsilon) I(\psi|\psi_{per}) + \frac{1}{4 \varepsilon} \|K(t)-K_{per}(t)\|^2 \int \|x\|^2 \psi.
\end{align*}
Now, using~\eqref{eq:1}, the fact that $\int \|x\|^2 \psi=\tr(M)=1$, and Proposition~\ref{prop:ISL}, we get
\begin{align*}
\frac{d}{dt} H(\psi|\psi_{per}) 
& \le - \frac{(1-\varepsilon)}{2\mu} H(\psi|\psi_{per}) + \frac{C}{4 \varepsilon} \exp(-2\lambda t),
\end{align*}
from which we deduce the exponential convergence of $ H(\psi|\psi_{per}) $  to zero, using the Gronwall Lemma.
\end{proof}

\begin{remark}
It is easy, by making precise all the constants used in the bounds above, to give an expression for the rate of convergence $\nu$ in terms of $\rho$ defined in~\eqref{eq:rho} and $\mu$ defined in~\eqref{eq:mu}.
\end{remark}

\appendix

\section{Proof of Proposition~\ref{prop:doi}}

We provide in this appendix a proof of Proposition~\ref{prop:doi}.
Adapting a standard coupling approach, see for example~\cite{hitsuda-mitoma-86}, we introduce $N$ independent copies of the nonlinear stochastic differential equation~\eqref{eq:Y}:
$$dY^i_t = \kappa Y^i_t \, dt + \sqrt{2} d B^i_t  - \frac{Y^i_t}{L^2} \, \E(Y^i_t \cdot \kappa Y^i_t) \, dt - d \,\frac{Y^i_t}{L^2} dt,$$
driven by the \emph{same} Brownian motions as the processes $(X^{i,I}_t)_{t \ge 0}$. Let us recall that, from~\eqref{eq:IC1}, we have: for all positive time $t$,
\begin{equation}\label{eq:Xnorm}
\frac{1}{I} \sum_{i=1}^I \|X^{i,I}_t\|^2 = L^2,
\end{equation}
so that
$$\E\|X^{i,I}_t\|^2 = L^2$$
since the law of the stochastic process $\XX^I_t$ is invariant under permutation of the indices of its components $(X^{1,I}_t, \ldots, X^{I,I}_t)$.
Moreover, since $\E\|Y^i_0\|^2=L^2$, we also have, for all  positive time $t$,
\begin{equation}\label{eq:Ynorm}
\E(\|Y^i_t\|^2)=\E(\|Y_t\|^2)=L^2.
\end{equation}
This originates from the fact that $\E(\|Y_t\|^2)$ solves the ordinary
differential equation:\begin{align*}
\frac{d}{dt} \E(\|Y_t\|^2)
&= 2 \E(Y_t \cdot \kappa Y_t) \left( 1 - \frac{\E(\|Y_t\|^2)}{L^2}\right) + 2 d \left(1- \frac{\E(\|Y_t\|^2)}{L^2}\right).
\end{align*}
We now introduce the difference
$$Z^{i,I}_t=X^{i,I}_t- Y^i_t,$$
with initial condition $Z^{i,I}_0=Y^i_0\left(L\,\left(\frac{1}{ I} \sum_{i=1}^I \|Y^i_0 \|^2\right)^{-1/2}  - 1\right)$. We have:
\begin{align*}
Z^{i,I}_t &= Z^{i,I}_0 + \left(\kappa - \frac{d \Id}{L^2} \right) \int_0^t Z^{i,I}_s \, ds \\
& \quad - \frac{1}{L^2} \int_0^t \left( X^{i,I}_s\frac{1}{I} \sum_{j=1}^I X^{j,I}_s \cdot \kappa X^{j,I}_s - Y^i_s \, \E(Y^i_s \cdot \kappa Y^i_s) \right)\, ds \\
& \quad - \int_0^t \frac{X^{i,I}_s}{L} \frac{1}{\sqrt{I}} \frac{\XX^I_s \cdot d\BB_s}{\|\XX^I_s\|} + \frac{ 1}{IL^2} \int_0^t X^{i,I}_s ds.
\end{align*}
Thus, using the Doob inequality to get
\begin{align*}
\E \left( \sup_{s \le t} \int_0^s \| X^{i,I}_r \|  \frac{\XX^I_r \cdot d\BB_r}{\|\XX^I_r\|} \right)^2 
& \le C  \, \E \left(\int_0^t  \| X^{i,I}_r\|  \frac{\XX^I_r \cdot d\BB_r}{\|\XX^I_r\|} \right)^2 \\
&=  C  \,\int_0^t \E \|X^{i,I}_r\|^2 \, dr \le C L^2 T,
\end{align*}
(where here and throughout this proof, $C$ denotes irrelevant constants that
depend on $T$, $\kappa$, $d$, $L$ but not on $I$), we obtain, for any time $t \in [0,T]$, ($T$ is fixed),
\begin{align}
\E \left( \sup_{s \le t} \|Z^{i,I}_s\|^2  \right)
& \le \E \|Z^{i,I}_0\|^2 + C \int_0^t \E \left( \sup_{r \le s}  \|Z^{i,I}_r\|^2 \right) \, ds \nonumber\\
& \quad + C \int_0^t \E \left\| X^{i,I}_s\frac{1}{I} \sum_{j=1}^I X^{j,I}_s \cdot \kappa X^{j,I}_s - Y^i_s \, \E(Y^i_s \cdot \kappa Y^i_s) \right\|^2 \, ds \label{eq:diff} \\
& \quad + \frac{C}{I}. \nonumber
\end{align}
 We are now going to
estimate from above the first and third terms of the right-hand side.

We begin with the first term, which involves the initial condition $Z^{i,I}_0$. Using that, for all $x,y>0$, $(x-y)^2 \le |x^2 - y^2|$, we have:
\begin{align}
\E \|Z^{i,I}_0\|^2
&= \E \left(\|Y^i_0\|^2 \left(\frac{L}{\sqrt{\frac{1}{ I} \sum_{i=1}^I \|Y^i_0 \|^2 }}  - 1\right)^2 \right) \nonumber\\
&
= \E \left(\frac{1}{I} \sum_{j=1}^I\|Y^j_0\|^2 \left(\frac{L}{\sqrt{\frac{1}{ I} \sum_{i=1}^I \|Y^i_0 \|^2 }}  - 1\right)^2 \right)\nonumber\\
&= \E \left( \left(L - \sqrt{\frac{1}{ I} \sum_{i=1}^I \|Y^i_0 \|^2} \right)^2 \right)\nonumber\\
&= \E \left( \left( \frac{L^2 - \frac{1}{ I} \sum_{i=1}^I \|Y^i_0 \|^2}{L + \sqrt{\frac{1}{ I} \sum_{i=1}^I \|Y^i_0 \|^2}} \right)^2 \right)\nonumber\\
& \le \frac{1}{L^2} \, \E \left( \left( L^2 - \frac{1}{ I} \sum_{i=1}^I \|Y^i_0 \|^2 \right)^2 \right) = \frac{1}{I} \frac{\var(\|Y^i_0\|^2)}{L^2}. \label{eq:T0_control}
\end{align}

We now consider the term~\eqref{eq:diff}, which we split as follows:
\begin{align}
 X^{i,I}_s\frac{1}{I} \sum_{j=1}^I X^{j,I}_s \cdot \kappa X^{j,I}_s - Y^i_s \, \E(Y^i_s \cdot \kappa Y^i_s) 
&=( X^{i,I}_s - Y^i_s )\frac{1}{I} \sum_{j=1}^I X^{j,I}_s \cdot \kappa X^{j,I}_s \label{eq:T1} \\
&\quad + Y^{i}_s \frac{1}{I} \sum_{j=1}^I \left(X^{j,I}_s \cdot \kappa X^{j,I}_s - Y^{j}_s \cdot \kappa Y^{j}_s\right) \label{eq:T2}\\
& \quad +  Y^{i}_s \left( \frac{1}{I} \sum_{j=1}^I Y^{j}_s \cdot \kappa Y^{j}_s - \E(Y^i_s \cdot \kappa Y^i_s) \right).\label{eq:T3}
\end{align}
For the first term~\eqref{eq:T1}, we have, using~\eqref{eq:Xnorm}
\begin{equation}\label{eq:T1_control}
\E \left\| (X^{i,I}_s - Y^i_s) \frac{1}{I} \sum_{j=1}^I X^{j,I}_s \cdot \kappa X^{j,I}_s\right\|^2  \le C \, \E \| Z^{i,I}_s\|^2 \le C \, \E \left(\sup_{ r \le s} \| Z^{i,I}_r\|^2 \right).
\end{equation}
For the third term~\eqref{eq:T3}, we write:
\begin{align*}
&\E \left\| Y^{i}_s \left( \frac{1}{I} \sum_{j=1}^I Y^{j}_s \cdot \kappa Y^{j}_s - \E(Y^i_s \cdot \kappa Y^i_s) \right) \right\|^2\\
&=\frac{1}{I^2} \sum_{j,k=1}^I \E \left( \| Y^{i}_s \|^2   \left( Y^{j}_s \cdot \kappa Y^{j}_s - \E(Y^j_s \cdot \kappa Y^j_s) \right) \left( Y^{k}_s \cdot \kappa Y^{k}_s - \E(Y^k_s \cdot \kappa Y^k_s) \right)  \right).
\end{align*}
By independence of the stochastic processes $(Y^i_t)_{i \ge 1}$, the terms in the sum are zero if $j \neq k$. Thus,
\begin{eqnarray}
&&\E \left\| Y^{i}_s \left( \frac{1}{I} \sum_{j=1}^I Y^{j}_s \cdot \kappa Y^{j}_s - \E(Y^i_s \cdot \kappa Y^i_s) \right) \right\|^2 
=\frac{1}{I^2} \sum_{j=1}^I \E \left( \| Y^{i}_s \|^2   \left( Y^{j}_s \cdot \kappa Y^{j}_s - \E(Y^j_s \cdot \kappa Y^j_s) \right)^2 \right) \nonumber \\
&&\quad\quad\quad\quad \le \frac{1}{I^2} \sum_{j=1}^I \sqrt{ \E \left( \| Y^{i}_s \|^4\right)} \sqrt{ \E   \left( Y^{j}_s \cdot \kappa Y^{j}_s - \E(Y^j_s \cdot \kappa Y^j_s) \right)^4 }  \nonumber \\
&&\quad\quad\quad\quad\le \frac{C}{I} \label{eq:T3_control}
\end{eqnarray}
using that $\sup_{s \in [0,T]} \E \left( \| Y_s \|^8\right) < \infty$,
which is easy to check from~\eqref{eq:Y} provided the initial condition
$Y_0$ is assumed to have finite moments up to order 8. We finally estimate the second term~\eqref{eq:T2}. We have
\begin{align*}
&\E \left\| Y^{i}_s \frac{1}{I} \sum_{j=1}^I \left(X^{j,I}_s \cdot \kappa X^{j,I}_s - Y^{j}_s \cdot \kappa Y^{j}_s\right) \right\|^2\\
& \le C
\E \left( \|Y^{i}_s\|^2 \frac{1}{I^2} \left( \sum_{j=1}^I  \|Z^{j,I}_s\| \left( \|X^{j,I}_s \| +  \|Y^{j}_s\|\right) \right)^2 \right)\\
& \le C
\E \left( \|Y^{i}_s\|^2 \left( \frac{1}{I} \sum_{j=1}^I  \|Z^{j,I}_s\|^2 \right) \left( \frac{1}{I} \sum_{j=1}^I   \left( \|X^{j,I}_s \|^2 +  \|Y^{j}_s\|^2 \right)\right)  \right) \\
& = C
\E \left( \left( \frac{1}{I} \sum_{j=1}^I \|Y^{j}_s\|^2 \right) \left( \frac{1}{I} \sum_{j=1}^I  \|Z^{j,I}_s\|^2 \right) \left( L^2 + \frac{1}{I} \sum_{j=1}^I  \|Y^{j}_s\|^2 \right)  \right)\\
&= C L^2  \E \left( \left( \frac{1}{I} \sum_{j=1}^I \|Y^{j}_s\|^2 \right) \left( \frac{1}{I} \sum_{j=1}^I  \|Z^{j,I}_s\|^2 \right)   \right) + C \E \left( \left( \frac{1}{I} \sum_{j=1}^I \|Y^{j}_s\|^2 \right)^2 \left( \frac{1}{I} \sum_{j=1}^I  \|Z^{j,I}_s\|^2 \right) \right).
\end{align*}
Consider the second term (the first term is addressed similarly). We write
\begin{align}
\E &\left( \left( \frac{1}{I} \sum_{j=1}^I \|Y^{j}_s\|^2 \right)^2 \left( \frac{1}{I} \sum_{j=1}^I  \|Z^{j,I}_s\|^2 \right) \right)\nonumber \\
&=\E \left( \left( \frac{1}{I} \sum_{j=1}^I \|Y^{j}_s\|^2 \right)^2 \left( \frac{1}{I} \sum_{j=1}^I  \|Z^{j,I}_s\|^2 \right) 1_{\frac{1}{I} \sum_{j=1}^I \|Y^{j}_s\|^2 \le 2L^2} \right) \nonumber \\
& \quad +\E \left( \left( \frac{1}{I} \sum_{j=1}^I \|Y^{j}_s\|^2 \right)^2 \left( \frac{1}{I} \sum_{j=1}^I  \|Z^{j,I}_s\|^2 \right) 1_{\frac{1}{I} \sum_{j=1}^I \|Y^{j}_s\|^2 > 2L^2} \right)\nonumber \\
& \le 4 L^4 \E \|Z^{j,I}_s\|^2 + C \E \left( \left( \frac{1}{I} \sum_{j=1}^I \|Y^{j}_s\|^2 \right)^2 \left( \frac{1}{I} \sum_{j=1}^I  \left( \|X^{j,I}_s\|^2 +  \|Y^{j}_s\|^2\right) \right) 1_{\frac{1}{I} \sum_{j=1}^I \|Y^{j}_s\|^2 > 2L^2} \right)\nonumber \\
&= 4 L^4 \E \|Z^{j,I}_s\|^2 + C L^2 \E \left( \left( \frac{1}{I} \sum_{j=1}^I \|Y^{j}_s\|^2 \right)^2  1_{\frac{1}{I} \sum_{j=1}^I \|Y^{j}_s\|^2 > 2L^2} \right) \nonumber \\
& \quad + C \E \left( \left( \frac{1}{I} \sum_{j=1}^I \|Y^{j}_s\|^2 \right)^3 1_{\frac{1}{I} \sum_{j=1}^I \|Y^{j}_s\|^2 > 2L^2} \right)\nonumber \\
& \le C \E \left( \sup_{r \le s} \|Z^{j,I}_r\|^2 \right) + \frac{C}{I^2}. \label{eq:T2_control}
\end{align}
Here, we have used a concentration inequality, based on~\eqref{eq:Ynorm}: for $n=2,3$,
\begin{align*}
\E& \left( \left( \frac{1}{I} \sum_{j=1}^I \|Y^{j}_s\|^2 \right)^n 1_{\frac{1}{I} \sum_{j=1}^I \|Y^{j}_s\|^2 > 2L^2} \right)\\
&=\E \left( \left( \frac{1}{I} \sum_{j=1}^I \|Y^{j}_s\|^2 -L^2 + L^2 \right)^n 1_{\frac{1}{I} \sum_{j=1}^I \|Y^{j}_s\|^2 -L^2 > L^2} \right)\\
&\le 2^{n-1}\E \left( \left( \left(  \frac{1}{I} \sum_{j=1}^I \left( \|Y^{j}_s\|^2 -L^2\right) \right)^n + L^{2n} \right)  1_{\frac{1}{I} \sum_{j=1}^I \|Y^{j}_s\|^2 -L^2 > L^2} \right)\\
&=  2^{n-1}\E \left( \left( \frac{1}{I} \sum_{j=1}^I \left(
      \|Y^{j}_s\|^2 -L^2\right) \right)^n  1_{\frac{1}{I} \sum_{j=1}^I
    \|Y^{j}_s\|^2 -L^2 > L^2} \right)\\
&\quad\quad  + 2^{n-1} L^{2n} \E\left(1_{\frac{1}{I} \sum_{j=1}^I \|Y^{j}_s\|^2 -L^2 > L^2} \right) \\
& \le \frac{C}{I^2}.
\end{align*}
The last line is a consequence of the following: for $\xi_j=\|Y^{j}_s\|^2 -L^2$ i.i.d. centered random variables with finite fourth moment, $\alpha = L^2 > 0$, and $m=0,2,3$,
\begin{align*}
\E \left( \left( \frac{1}{I} \sum_{j=1}^I \xi_j \right)^m 1_{ \frac{1}{I} \sum_{j=1}^I \xi_j > \alpha} \right) \le \frac{1}{\alpha^{4-m}} \E\left( \left( \frac{1}{I} \sum_{j=1}^I \xi_j \right)^{4} \right) \le \frac{C}{I^2},
\end{align*}
where $C$ depends on $\sup_{s \in [0,T]} \E \left( \|Y^{j}_s\|^8 \right) < \infty$.

Inserting~\eqref{eq:T0_control}--\eqref{eq:T1_control}--\eqref{eq:T3_control}--\eqref{eq:T2_control} in~\eqref{eq:diff}, we obtain: for all time $t \in [0,T]$,
\begin{equation*}
\E \left( \sup_{s \le t} \|Z^{i,I}_s\|^2  \right)
 \le C \int_0^t \E \left( \sup_{r \le s}  \|Z^{i,I}_r\|^2 \right) \, ds + \frac{C}{I},
\end{equation*}
and we conclude using the Gronwall lemma.

\acknowledgement{The last two authors would like to thank Greg Forest for enlightening
  discussions, in particular during a
  workshop on polymer flows organized at Ecole des Ponts ParisTech in January 2009. The variety of observed and simulated  long
   term behaviours of nematic polymer flows along with the importance of
   mathematically understanding such phenomena were then pointed out. The help of Victor
   Kleptsyn regarding the theory of planar dynamical systems is also acknowledged.
}


\begin{thebibliography}{10}

\bibitem{ABC-00}
C.~An\'e, S.~Blach\`ere, D.~Chafa\"i, P.~Foug\`eres, I.~Gentil, F.~Malrieu,
  C.~Roberto, and G.~Scheffer.
\newblock {\em Sur les in\'{e}galit\'{e}s de Sobolev logarithmiques}.
\newblock Soci\'{e}t\'{e} Math\'{e}matique de France, 2000.
\newblock In French.

\bibitem{arnold-markowich-toscani-unterreiter-01}
A.~Arnold, P.~Markowich, G.~Toscani, and A.~Unterreiter.
\newblock On convex {S}obolev inequalities and the rate of convergence to
  equilibrium for {F}okker-{P}lanck type equations.
\newblock {\em Comm. Part. Diff. Eq.}, 26:43--100, 2001.

\bibitem{bartier-dolbeault-illner-kowalczyk-07}
J.P. Bartier, J.~Dolbeault, R.~Illner, and M.~Kowalczyk.
\newblock A qualitative study of linear drift-diffusion equations with
  time-dependent or degenerate coefficients.
\newblock {\em Math. Models and Methods in Applied Sciences}, 17(3):327--362,
  2007.

\bibitem{ciccotti-lelievre-vanden-einjden-08}
G.~Ciccotti, T.~Leli{\`e}vre, and E.~Vanden-Eijnden.
\newblock Projection of diffusions on submanifolds: {A}pplication to mean force
  computation.
\newblock {\em Commun. Pur. Appl. Math.}, 61(3):371--408, 2008.

\bibitem{constantin-kevrekidis-titi-04_b}
P.~Constantin, I.~Kevrekidis, and E.S. Titi.
\newblock Asymptotic states of a {Smoluchowski} equation.
\newblock {\em Archive Rational Mech. Analysis}, 174(3):365--384, 2004.

\bibitem{constantin-kevrekidis-titi-04_a}
P.~Constantin, I.~Kevrekidis, and E.S. Titi.
\newblock Remarks on a {Smoluchowski} equation.
\newblock {\em Disc. and Cont. Dyn. Syst.}, 11(1):101--112, 2004.

\bibitem{doi-81}
M.~Doi.
\newblock Molecular dynamics and rheological properties of concentrated
  solutions of rodlike polymers in isotropic and liquid crystalline phases.
\newblock {\em J. Polym. Sci., Polym. Phys. Ed.}, 19(2):229 -- 243, 1981.

\bibitem{dolbeault-kinderlehrer-kowalczyk-02}
J.~Dolbeault, D.~Kinderlehrer, and M.~Kowalczyk.
\newblock The flashing ratchet: long time behavior and dynamical systems
  interpretation.
\newblock Technical Report 0244, CEREMADE, 2002.
\newblock Available at {\tt
  http://www.ceremade.dauphine.fr/preprints/CMD/2002-44.pdf}.

\bibitem{hardy-wright-79}
G.H. Hardy and E.M. Wright.
\newblock {\em An introduction to the theory of numbers}.
\newblock Oxford, 1979.

\bibitem{hitsuda-mitoma-86}
M.~Hitsuda and I.~Mitoma.
\newblock Tightness problem and stochastic evolution equation arising from
  fluctuation phenomena for interacting diffusions.
\newblock {\em J. Multivariate Anal.}, 19(2):311--328, 1986.

\bibitem{jourdain-le-bris-lelievre-otto-06}
B.~Jourdain, C.~Le~Bris, T.~Leli\`evre, and F.~Otto.
\newblock Long-time asymptotics of a multiscale model for polymeric fluid
  flows.
\newblock {\em Archive for Rational Mechanics and Analysis}, 181(1):97--148,
  2006.

\bibitem{kuznetsov-98}
Y.A. Kuznetsov.
\newblock {\em Elements of Applied Bifurcation Theory}.
\newblock Springer, 1998.

\bibitem{le-bris-lelievre-vanden-eijnden-08}
C.~Le~Bris, T.~Leli\`evre, and E.~Vanden~Eijnden.
\newblock Analysis of some discretization schemes for constrained stochastic
  differential equations.
\newblock {\em C.R. Acad. Sci. Paris, Ser. I.}, 346(7-8):471--476, 2008.

\bibitem{lee-forest-zhou-06}
J.H. Lee, M.G. Forest, and R.~Zhou.
\newblock Alignment and {R}heo-oscillator criteria for sheared nematic polymer
  films in the monolayer limit.
\newblock {\em Discrete Contin. Dyn. Syst. Ser. B}, 6(2):339--356, 2006.

\bibitem{meiss-07}
J.D. Meiss.
\newblock {\em Differential dynamical systems}.
\newblock SIAM, 2007.

\bibitem{niven-zuckerman-montgomery-91}
I.~Niven, H.S. Zuckerman, and H.L. Montgomery.
\newblock {\em An introduction to the theory of numbers}.
\newblock Wiley, 1991.

\bibitem{sznitman-91}
A.-S. Sznitman.
\newblock Topics in propagation of chaos.
\newblock In {\em \'{E}cole d'\'{E}t\'e de {P}robabilit\'es de {S}aint-{F}lour
  {XIX}---1989}, volume 1464 of {\em Lecture Notes in Math.}, pages 165--251.
  Springer, 1991.

\bibitem{zhang-zhang-04_doi}
H.~Zhang and P.W. Zhang.
\newblock A theoretical and numerical study for the rod-like model of a
  polymeric fluid.
\newblock {\em Journal of Computational Mathematics}, 22(2):319--330, 2004.

\end{thebibliography}

\end{document}